\let\origsection=\section \def\section{\@ifstar{\origsection*}{\mysection}}
\def\mysection{\@startsection{section}{1}\z@{.7\linespacing\@plus\linespacing}{.5\linespacing}{\normalfont\scshape\centering\S}}
\renewcommand{\PrintDOI}[1]{\doi{#1}}
\numberwithin{equation}{section}
\def\greek#1{\expandafter\@greek\csname c@#1\endcsname}
\def\Greek#1{\expandafter\@Greek\csname c@#1\endcsname}
\def\@greek#1{\ifcase#1
	\or $\alpha$%
	\or $\beta$%
	\or $\gamma$%
	\or $\delta$%
	\or $\epsilon$%
	\or $\zeta$%
	\or $\eta$%
	\or $\theta$%
	\or $\iota$%
	\or $\kappa$%
	\or $\lambda$%
	\or $\mu$%
	\or $\nu$%
	\or $\xi$%
	\or $o$%
	\or $\pi$%
	\or $\rho$%
	\or $\sigma$%
	\or $\tau$%
	\or $\upsilon$%
	\or $\phi$%
	\or $\chi$%
	\or $\psi$%
	\or $\omega$%
\fi}
\def\@Greek#1{\ifcase#1
	\or $\mathrm{A}$%
	\or $\mathrm{B}$%
	\or $\Gamma$%
	\or $\Delta$%
	\or $\mathrm{E}$%
	\or $\mathrm{Z}$%
	\or $\mathrm{H}$%
	\or $\Theta$%
	\or $\mathrm{I}$%
	\or $\mathrm{K}$%
	\or $\Lambda$%
	\or $\mathrm{M}$%
	\or $\mathrm{N}$%
	\or $\Xi$%
	\or $\mathrm{O}$%
	\or $\Pi$%
	\or $\mathrm{P}$%
	\or $\Sigma$%
	\or $\mathrm{T}$%
	\or $\mathrm{Y}$%
	\or $\Phi$%
	\or $\mathrm{X}$%
	\or $\Psi$%
	\or $\Omega$%
\fi}
\AddEnumerateCounter{\greek}{\@greek}{24}
\AddEnumerateCounter{\Greek}{\@Greek}{12}
\let\polishlcross=\l
\def\l{\ifmmode\ell\else\polishlcross\fi}
\def\paragraph#1{%
  \noindent\textbf{#1.}\enspace}
\let\emptyset=\varnothing
\def\moverlay{\mathpalette\mov@rlay}
\def\mov@rlay#1#2{\leavevmode\vtop{   \baselineskip\z@skip \lineskiplimit-\maxdimen
   \ialign{\hfil$\m@th#1##$\hfil\cr#2\crcr}}}
\newcommand{\charfusion}[3][\mathord]{
    #1{\ifx#1\mathop\vphantom{#2}\fi
        \mathpalette\mov@rlay{#2\cr#3}
      }
    \ifx#1\mathop\expandafter\displaylimits\fi}
\DeclareFontFamily{U}  {MnSymbolC}{}
\DeclareSymbolFont{MnSyC}         {U}  {MnSymbolC}{m}{n}
\DeclareFontShape{U}{MnSymbolC}{m}{n}{
    <-6>  MnSymbolC5
   <6-7>  MnSymbolC6
   <7-8>  MnSymbolC7
   <8-9>  MnSymbolC8
   <9-10> MnSymbolC9
  <10-12> MnSymbolC10
  <12->   MnSymbolC12}{}
\DeclareMathSymbol{\powerset}{\mathord}{MnSyC}{180}
\let\epsilon=\varepsilon
\let\rho=\varrho
\let\theta=\vartheta
\let\kappa=\varkappa
\let\E=\EE
\def\PP{{\mathds P}}
\newcommand{\cN}{\mathcal{N}}
\theoremstyle{plain}
\newtheorem{thm}{Theorem}[section]
\newtheorem{theorem}[thm]{Theorem}
\newtheorem{prop}[thm]{Proposition}
\newtheorem{claim}[thm]{Claim}
\newtheorem{lemma}[thm]{Lemma}
\theoremstyle{definition}
\let\phi=\varphi
\begin{document}
\dedicatory{Dedicated to the memory of Ron Graham}

\title[Twins and up-and-down permutations]{On weak twins and up-and-down sub-permutations}

\author{Andrzej Dudek}
\address{Department of Mathematics, Western Michigan University, Kalamazoo, MI, USA}
\email{\tt andrzej.dudek@wmich.edu}
\thanks{The first author was supported in part by Simons Foundation Grant \#522400.}

\author{Jaros\l aw Grytczuk}
\address{Faculty of Mathematics and Information Science, Warsaw University of Technology, Warsaw, Poland}
\email{j.grytczuk@mini.pw.edu.pl}
\thanks{The second author was supported in part by the Polish NSC grant 2015/17/B/ST1/02660.}

\author{Andrzej Ruci\'nski}
\address{Department of Discrete Mathematics, Adam Mickiewicz University, Pozna\'n, Poland}
\email{\tt rucinski@amu.edu.pl}
\thanks{The third author was supported in part by the Polish NSC grant 2018/29/B/ST1/00426}

\begin{abstract}
 Two permutations $(x_1,\dots,x_w)$ and $(y_1,\dots,y_w)$ are \emph{weakly similar} if $x_i<x_{i+1}$ if and only if $y_i<y_{i+1}$ for all $1\leqslant i \leqslant w$. Let $\pi$ be a permutation of the set $[n]=\{1,2,\dots, n\}$ and let $wt(\pi)$ denote the largest integer $w$ such that $\pi$ contains a pair of \emph{disjoint} weakly similar sub-permutations (called \emph{weak twins}) of length $w$. Finally, let $wt(n)$ denote the minimum of $wt(\pi)$ over all permutations $\pi$ of $[n]$. Clearly, $wt(n)\le n/2$. In this paper we show that $\tfrac n{12}\le wt(n)\le\tfrac n2-\Omega(n^{1/3})$.

 We also study a variant of this problem. Let us say that $\pi'=(\pi(i_1),...,\pi(i_j))$, $i_1<\cdots<i_j$, is an \emph{alternating} (or \emph{up-and-down}) sub-permutation of $\pi$ if $\pi(i_1)>\pi(i_2)<\pi(i_3)>...$ or $\pi(i_1)<\pi(i_2)>\pi(i_3)<...$. Let $\Pi_n$ be a random permutation selected uniformly from all $n!$ permutations of $[n]$. It is known (\cite{StanleyMichigan}) that the length of a longest alternating permutation in $\Pi_n$ is asymptotically almost surely (a.a.s.) close to $2n/3$. We study the maximum length $\alpha(n)$ of a pair of disjoint alternating sub-permutations in $\Pi_n$ and show that there are two constants $1/3<c_1<c_2<1/2$ such that a.a.s. $c_1n\le \alpha(n)\le c_2n$.
 
 In addition, we show that the alternating shape is the most popular among all permutations of a given length.
\end{abstract}

\maketitle


\setcounter{footnote}{1}

\section{Introduction}
Looking for twin objects in mathematical structures has long and rich tradition going back to ancient geometric dissection problems and culminating in the famous Banach-Tarski Paradox (see \cite{TomkowiczWagon}). From that research we know, for instance, that two very differently looking objects, like the Sun and an apple, or the square and the circle, can be split into finitely many pairwise identical pieces. A general problem is to partition a given structure (or structures) into possibly few pairwise similar substructures. A related issue is to find, in a given structure, a pair of twin substructures, as large as possible.

 Despite such `continuous' origins, questions of that sort can be studied in diverse discrete contexts, with various types of similarity specified between the objects. For instance, Chung, Graham, Erd\H {o}s, Ulam, and Yao \cite{ChungEGUY} studied edge decompositions of pairs of graphs into pairwise isomorphic subgraphs (see also \cite{ChungEG}, \cite{Graham}), while Erd\H {o}s, Pach, and Pyber \cite{ErdosPachPyber} looked for twins in a single graph (defined as a pair of edge disjoint isomorphic subgraphs). Axenovich, Person, and Puzynina (\cite{Axenovich}, \cite{AxenovichPersonPuzynina}) investigated twins in words, and Gawron \cite{Gawron}, inspired by their work, initiated exploration of twins in permutations (defined as a pair of disjoint \emph{order-isomorphic} sub-permutations).

Let us dwell on this last problem for a while. By a \emph{permutation} we mean any finite sequence of distinct positive integers. Let $t(n)$ be the maximum number $k$ such that every permutation of length $n$ has a pair of twins, each of length $k$. By a probabilistic argument, Gawron \cite{Gawron} proved that $t(n)=O(n^{2/3})$ and made a conjecture that this is best possible, that is, $t(n)=\Theta(n^{2/3})$. We confirmed this conjecture in \cite{DGR} (up to a logarithmic factor) for a \emph{random} permutation. A refinement of our result (getting rid of the logarithmic factor) was then obtained by Bukh and Rudenko \cite{BukhR}. In the deterministic case, the $t(n)\geqslant\Omega(\sqrt{n})$ follows immediately from the famous result of Erd\H {o}s and Szekeres \cite{ErdosSzekeres} on monotone subsequences in permutations. Currently, the best lower bound $t(n)=\Omega(n^{3/5})$ is due to Bukh and Rudenko \cite{BukhR}.

In this paper we consider a weaker type of similarity of permutations than order-isomorphism in which we only look at the relations between neighboring element. We say that two permutations $(x_1,\dots,x_w)$ and $(y_1,\dots,y_w)$ are \emph{weakly similar} if $x_i<x_{i+1}$ if and only if $y_i<y_{i+1}$ for all  $1\leqslant i\leqslant w$. 

This notion can be equivalently defined in terms of shapes. For our purposes, \emph{the shape} of a permutation $\pi=(x_1,\dots,x_w)$  is defined as a binary sequence $s(\pi)=(s_1,\dots,s_{w-1})$ with elements from the set $\{+,-\}$, where $s_i=+$ if and only $x_i<x_{i+1}$, $i=1,\dots,w-1$. For instance, $s(6, 1 , 4 , 3, 7 , 9, 8 , 2 , 5)=(-,+,-,+,+,-,-,+)$.
Then, permutations $\pi_x=(x_1,\dots,x_w)$ and $\pi_y=(y_1,\dots,y_w)$ are weakly similar if $s(\pi_x)=s(\pi_y)$.



Let $[n]=\{1,2,\dots,n\}$ and let $\pi$ be a permutation of $[n]$, called also an \emph{$n$-permutation}. Two weakly similar disjoint sub-permutations of $\pi$ are called \emph{weak twins} and the \emph{length of the twins} is defined as the number of elements in just \emph{one} of the sub-permutations. For example, in permutation $$(6,\colorbox{cyan}{1},\colorbox{cyan}{4},3,\colorbox{Lavender}{7},9,\colorbox{Lavender}{8},\colorbox{cyan}{2},\colorbox{Lavender}{5}),$$
the blue $(1,4,2)$ and red $(7,8,5)$ subsequences form  weak twins of length $3$ (with a common shape $(+,-)$).

Let $wt(\pi)$ denote the largest integer $w$ such that $\pi$ contains  weak twins of length $w$. Further, let $wt(n)$ denote the minimum of $wt(\pi)$ over all $n$-permutations $\pi$. In other words, $wt(n)$ is the largest integer $w$ such that every $n$-permutation contains  weak twins of length~$w$.
Our aim is to estimate this function which, unlike its stronger version $t(n)$, turns out to be linear in $n$.




\begin{theorem}\label{thm_weak} For $n$ large enough,
\begin{equation}\label{bounds_gen}
\frac n{12}\le wt(n)\le\frac n2-\Omega(n^{1/3}).
\end{equation}
\end{theorem}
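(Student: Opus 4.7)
The plan is to establish the two bounds by largely independent arguments. For the lower bound $wt(n)\ge n/12$, I would analyze $\pi$ through its maximal monotone-run decomposition $\pi = R_1R_2\cdots R_r$, where consecutive runs share exactly one endpoint, so that $\sum_i |R_i| = n+r-1$. Hence the longest monotone sub-permutation has length $M \ge 1 + (n-1)/r$, while picking one local extremum per run yields an alternating sub-permutation of length $A \ge r+1$. In particular $M\cdot A \ge n$, so $\max(M,A) \ge \sqrt{n}$, which is unfortunately too weak on its own.

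Next, split into cases according to which of $M,A$ is large. If $M \ge n/6$, cut a longest monotone sub-permutation into two halves of length $\lceil n/12\rceil$; since both halves share the same monotone shape, they are automatically weak twins. If instead $A \ge n/6$, take an alternating sub-permutation whose length is a multiple of $4$ and at least $n/6 - O(1)$, and split it into its first and second halves---because the shape $+-+-\cdots$ of length $4k$ is invariant under a shift by $2k$, the two halves share the same alternating shape. Either way we obtain weak twins of length $\ge n/12 - O(1)$.

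The remaining regime $M,A < n/6$ is the main obstacle, and it does occur for large $n$ since we only have $M\cdot A \ge n$. To handle it one must combine several short monotone runs: I would group the runs into blocks of bounded size and use pigeonhole on the finitely many possible shape patterns within such blocks to extract two disjoint collections of runs whose concatenations realize matching shapes, with total length still of order $n$.

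For the upper bound $wt(n) \le n/2 - \Omega(n^{1/3})$ the task is to exhibit an explicit $\pi^{*}$ whose weak twins are short. A natural candidate is a block-structured permutation: partition $[n]$ into about $n^{1/3}$ blocks of size $\sim n^{2/3}$ and arrange values so each block carries a distinctive local shape pattern. Two disjoint weak twins each of length $n/2$ would be forced to reuse every signature twice, and the limited supply of signatures forces a total deficit of $\Omega(n^{1/3})$ elements; the delicate step is to calibrate the construction so that the loss is of order exactly $n^{1/3}$ rather than a weaker polynomial or logarithmic quantity.
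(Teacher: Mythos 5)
Your overall architecture resembles the paper's, but both bounds have a genuine gap exactly at the step you defer. For the lower bound, your first two cases are fine, but the regime $M,A<n/6$ is not a corner case --- it is the heart of the problem (think of $\sqrt n$ monotone runs of length $\sqrt n$ each, stacked so that both the longest monotone and the longest alternating subsequence have length $\Theta(\sqrt n)$) --- and the ``pigeonhole on block shape patterns'' sketch does not amount to an argument. Pigeonholing finitely many patterns yields two blocks of \emph{bounded} length with matching shapes, i.e.\ twins of bounded length; to reach length $\Theta(n)$ you must chain many runs into two interleaved subsequences, and then the shape of each twin at the junction between consecutive runs depends on the actual values there, not merely on the shapes of the runs. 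The paper's key device is precisely a way to control these junctions: it keeps the growing twins \emph{aligned} (their two right-most elements interleave into a monotone quadruple) and proves a gluing claim showing that any monotone run of length at least $4$ lying to the right can be absorbed while losing at most $2$ elements. Applied to the runs of length $\ge 4$ (which carry at least $n/2$ of the mass once there are at most $n/6$ runs), this gives twins of length at least $\tfrac14 n-(k-1)\ge \tfrac14 n-\tfrac16 n=n/12$. Without some such junction control your third case remains open.

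For the upper bound, your instinct (about $n^{1/3}$ blocks of size about $n^{2/3}$) matches the paper's construction, but ``distinctive local shape signatures'' is not a workable mechanism: there are $2^{\Theta(n^{2/3})}$ shapes of that length, so scarcity of signatures cannot be the source of the deficit, and you give no reason why long twins must ``reuse every signature twice.'' The paper instead makes every block \emph{increasing}, stacks the blocks so each lies entirely below its predecessor, and chooses the block lengths to be odd and strictly decreasing by $2k$ with $k=\lceil n^{1/3}\rceil$. It then shows that any twins of length at least $n/2-k/3$ must intersect every block in sets of equal size: if one twin avoided a block entirely, the other would contain an increasing run longer than any subsequent block permits, contradicting weak similarity. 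Oddness then forces at least one lost element per block, i.e.\ a deficit of $\Omega(n^{1/3})$. Your write-up is missing the quantitative content --- in particular the decreasing-lengths condition that makes ``missing a block'' visible in the shape, and the parity trick that converts even splitting into a per-block loss.
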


Turning to our second result, note that given a sequence $s^{(n)}$ of length $n-1$, it is quite nontrivial to determine the number $N(s^{(n)})$ of $n$-element permutations with the shape~$s^{(n)}$. Of course, there is just one permutation with a given monotone shape, $(+,\dots,+)$ and $(-,\dots,-)$. But already for the alternating shapes, $a^{(n)}_{+}=(+,-,+,\dots)$ and $a^{(n)}_{-}=(-,+,-,\dots)$, this is so called Andr\'e's problem \cite{Andre}, which was solved asymptotically in the 19th century and exactly, in terms of a finite  sum of Stirling numbers, only in the 21th century \cite{Mendes} (see also \cite{StanleySurvey}).

 The asymptotic formula of Andr\'e says that, setting $A_n:=N(a^{(n)}_{+})=N(a^{(n)}_{-})$,
 $$A_n\sim2(2/\pi)^{n+1}n!.$$ In other words, the probability that a random $n$-permutation $\Pi_n$ is alternating (either way) is only $\sim4(2/\pi)^{n+1}$.
   On the other hand,  by the result of Stanley \cite{StanleyMichigan}, we know that a.a.s. a random $n$-permutation contains an alternating subsequence of length at least $\sim 2n/3$, yielding alternating twins of length at least $\sim n/3$ (just split in half a longest alternating sub-permutation in $\Pi_n$). In Theorem \ref{lem_ran_alt} we show, however, that a.a.s. one can get  substantially longer alternating twins; on the other hand, they are much shorter than $n/2$, the absolute upper bound.

To state this result, let $\alpha(\pi)$ be the largest integer $w$ such that $\pi$ contains weak twins of length $w$ with an alternating shape, $a^{(w)}_{+}$ or $a^{(w)}_{-}$. We will call them \emph{alternating twins}.
Further, set $\alpha_n:=\alpha(\Pi_n)$, where $\Pi_n$ is a random $n$-permutation.

\begin{theorem}\label{lem_ran_alt}
	A.a.s.
\begin{equation}\label{bounds}
\left(\frac13+\frac{1}{60}+o(1)\right) n\le \alpha_n\le\left(\frac12-\frac{1}{120}+o(1)\right) n.
\end{equation}
\end{theorem}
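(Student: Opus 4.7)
The plan is to handle the two inequalities in~\eqref{bounds} separately.

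For the lower bound $\alpha_n\ge(1/3+1/60+o(1))n$, the starting point is Stanley's result (quoted in the paragraph above), which a.a.s.\ provides $\Pi_n$ with an alternating sub-permutation $L$ of length $(2/3+o(1))n$. Splitting $L$ in half already produces alternating twins of length $(1/3+o(1))n$, the ``trivial'' bound recalled just before the theorem statement. To gain the additional $n/60$, I would exploit the $\sim n/3$ positions of $\Pi_n$ outside $L$: the sub-permutation induced on these remaining positions, while not uniformly distributed on its own, should still carry an alternating sub-permutation $M$ of length close to $(2/9)n$, obtained by another application of Stanley-type reasoning once the conditional distribution of the residual permutation is checked to be sufficiently mixing. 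Given $L$ and $M$, I would split $L$ into two roughly equal pieces $L_1,L_2$ and use carefully chosen elements of $M$ to extend each of them into a longer alternating sub-permutation, while ensuring that the two extensions agree both in length and in shape. The main obstacle here will be to perform this shape- and parity-matching step without wasting too many of the $\sim(2/9)n$ elements in $M$: the gain per twin ends up significantly less than $|M|/2$, and a careful optimization is needed to extract at least $n/60$.

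For the upper bound $\alpha_n\le(1/2-1/120+o(1))n$, a direct first-moment estimate on the number of disjoint pairs of alternating sub-permutations of length $w$ is exponentially large in $n$ for \emph{every} $w\le n/2$ (via the asymptotic $A_w\sim 2(2/\pi)^{w+1}w!$), so a plain union bound cannot close. My plan is instead to prove the stronger \emph{structural} inequality $|A|+|B|\le(1-1/60+o(1))n$ a.a.s., valid for every pair $A,B$ of disjoint alternating sub-permutations of $\Pi_n$; dividing by two then yields the announced upper bound on $\alpha_n$. To establish this structural inequality I would look for short local ``obstructions'' in $\Pi_n$ --- constant-length consecutive substrings whose relative order forces at least one of their positions to lie outside any such $A\cup B$ --- and then show that in a random permutation these obstructions cover at least $n/60$ positions a.a.s., e.g.\ via a variance computation or Azuma--Hoeffding applied to the Lipschitz count of obstructions. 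The hard part will be to identify the right family of local obstructions so that the density of wasted positions matches precisely the constant $1/60$ announced in the theorem.
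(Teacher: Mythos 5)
Both halves of your plan identify the right targets but leave the actual content of the proof unsupplied, and in the lower bound the route you sketch runs into a problem the paper explicitly had to engineer around. Your key step there is to extract from the $\sim n/3$ positions outside $L$ a second alternating sub-permutation $M$ of length $\approx (2/9)n$ by ``another application of Stanley-type reasoning.'' This is the crux, and it does not go through as stated: the residual permutation is far from uniform, and the paper concedes that it is ``quite hard to characterize the extremal points of $\pi'$.'' The paper's workaround is the notion of a \emph{lucky six}: a $6$-tuple of consecutive positions of the \emph{original} $\Pi_n$ whose pattern certifies an extremal point of $\pi'$. Because lucky sixes are local events in $\Pi_n$ itself, their probability is computable ($4\binom{4}{2}/6!=1/30$), giving $\sim n/60$ of them in each half, concentrated by Azuma--Hoeffding; this is exactly where the constant $1/60$ comes from, and your plan has no mechanism that produces it. The paper also avoids your shape- and parity-matching difficulty entirely by splitting by \emph{position}: it takes the extremal points of $\Pi_n$ in each half ($A_1,A_2$, each of length $\sim n/3$) and the certified extremal points of $\pi'$ in each half ($B_1,B_2$, each of length $\sim n/60$), and concatenates $A_1$ with $B_2$ and $A_2$ with $B_1$, losing at most one point per junction.

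For the upper bound you correctly reject the first-moment/union-bound route and correctly guess the target structural inequality $|A|+|B|\le(59/60+o(1))n$, but the entire proof \emph{is} the deterministic lemma you defer to ``identifying the right family of local obstructions.'' The paper's answer is the inequality $|A|+|B|\le e(\pi)+co(\pi)+cr(\pi)$, where $co$ and $cr$ count \emph{cornered} and \emph{crooked} $5$-tuples; it is proved not by exhibiting positions forced outside $A\cup B$, but by constructing an injection from $A\cup B$ into $E\cup F$, where $E$ is the set of extremal points and $F$ the set of their non-extremal neighbours (and $|F|=co(\pi)+cr(\pi)$). The injection rests on a nontrivial combinatorial analysis of how two disjoint alternating subsequences can distribute themselves over the monotone segments between consecutive extremal points (at most two elements of each of $A$, $B$ per segment, with constraints across consecutive segments). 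The densities $2/3$, $8/60$ and $11/60$ of these three structures then sum to $59/60$. Without the definitions of these $5$-tuples and the injection argument, the claimed constant $1/120$ is not established.
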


We end this paper by proving that, in fact, permutations with alternating shapes are the most popular ones. This result, not directly related to our  main theorems, may be of independent interest. 

\begin{prop}\label{alt_the_king}
For every $n$ and every shape $s^{(n)}$ of length $n-1$, we have $N(s^{(n)})\le A_n$.
\end{prop}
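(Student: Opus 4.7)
My plan is to prove $N(s^{(n)}) \le A_n$ by an injection argument based on a local swap. If $s$ has two consecutive equal signs, say $s_i = s_{i+1} = +$, then every permutation $\pi$ with shape $s$ satisfies $\pi_i < \pi_{i+1} < \pi_{i+2}$, and swapping the values $\pi_{i+1}$ and $\pi_{i+2}$ yields $\pi'$ with $\pi'_i < \pi'_{i+1} > \pi'_{i+2}$. The shape of $\pi'$ therefore differs from $s$ by flipping $s_{i+1}$ from $+$ to $-$, and possibly also flipping $s_{i+2}$ from $-$ to $+$ (the secondary flip occurs precisely when $s_{i+2} = -$ and $\pi_{i+1} < \pi_{i+3}$). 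The symmetric swap handles two consecutive $-$ signs.

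This swap is manifestly injective on its domain, with image in the union of two shape classes $s^*$ and $s^{**}$, both closer to alternating than $s$. To conclude $N(s) \le A_n$, I would iterate: locate the leftmost double-sign pair in the current shape, apply the corresponding swap, and repeat until the shape is alternating. The procedure terminates because at each step the leftmost defect either disappears or moves strictly to the right.

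The main obstacle is injectivity of the iteration. A naive rule fails already at $n = 4$ with shape $++-$: the permutations $(1,2,4,3)$ and $(1,3,4,2)$ both end at $(1,4,2,3)$ if one always swaps $\pi_{i+1}$ and $\pi_{i+2}$. Recovering injectivity requires choosing the swap adaptively---for instance, at each step using the transposition that creates the correct peak or valley based on comparisons among $\pi_{i+1}, \pi_{i+2}, \pi_{i+3}$---and designing such a rule in full generality and proving it remains injective is the technical heart of the argument. As an alternative, one may attempt a purely inductive proof via the recursion $N(s) = \sum_{j=1}^{n-1} w_j(s_1)\, M_j(s')$ obtained by removing the leading entry and relabeling by order type (with $w_j(+) = j$, $w_j(-) = n-j$, $s' = (s_2, \ldots, s_{n-1})$, and $M_j(s')$ the number of $(n-1)$-permutations of shape $s'$ starting with value $j$), together with a strengthened hypothesis controlling each $M_j$ separately so that the weighted sum is maximized by the alternating $s$.
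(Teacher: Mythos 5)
Your proposal does not reach a proof: both routes you describe stop exactly at the point where the real work begins, and you say so yourself. For the swap construction, you correctly observe that the naive rule is not injective (your $n=4$, shape $++-$ example with $(1,2,4,3)$ and $(1,3,4,2)$ both landing at $(1,4,2,3)$ is right), but the ``adaptive'' repair is never specified, and there is a structural reason to be pessimistic: a single local swap sends $\cN(s)$ into a \emph{union} of two shape classes $s^*$ and $s^{**}$, so even a perfectly injective one-step map only gives $N(s)\le N(s^*)+N(s^{**})$, not $N(s)\le\max$, and iterating such inequalities does not obviously terminate at $A_n$. For the second route, the recursion $N(s)=\sum_j w_j(s_1)M_j(s')$ is fine, but the ``strengthened hypothesis controlling each $M_j$ separately'' is precisely the hard part: the weights $w_j(+)=j$ mean you must compare the entire distribution of first values between the class of $s'$ and the class of the alternating shape (some majorization or stochastic-dominance statement), which is strictly stronger than the proposition itself and is neither formulated nor proved. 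As it stands, the proposal is a plan with an acknowledged hole at its center.

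For comparison, the paper's proof avoids both difficulties by conditioning on the position of the maximum element $n{+}1$ rather than on the first element. If $\pi\in\cN(s)$ has $n{+}1$ in position $k+1$, then necessarily $s_k=+$ and $s_{k+1}=-$, and the prefix and suffix are independent permutations with truncated shapes, giving $N_k(s)=\binom nk N(s_k')N(s_k'')\le\binom nk A_kA_{n-k}$ by induction. The peak positions $K^\wedge$ of $s$ and the peak positions $K^\vee$ of the complementary shape $\bar s$ are disjoint subsets of $\{0,\dots,n\}$, and since $N(s)=N(\bar s)$ one gets $2N(s)\le\sum_{k=0}^n\binom nk A_kA_{n-k}=2A_{n+1}$ by the classical convolution identity for the Euler numbers. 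The doubling trick via $\bar s$ is the key step your sketch has no analogue of; without something playing that role, neither of your approaches closes.
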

The proof of Proposition \ref{alt_the_king} can be found in Section \ref{proof_prop}.

\subsection*{Note} We believe that Ron Graham would like the topic of this paper. Not only he was among those who planted the idea  of twins into the combinatorial soil, but he also wrote several papers devoted to permutations , both, with and without connections to juggling
(see, e.g., \texttt{http://www.math.ucsd.edu/\~{}ronspubs/} for the entire collection of Ron's publications).

\section{Proofs of Theorems \ref{thm_weak} and \ref{lem_ran_alt}}

\subsection{Extremal points}
In our proofs a decisive role is played by local extremes. We call the element $i$ \emph{maximal} in $\pi$ if $i=1$ and $\pi(1)>\pi(2)$, or $i=n$ and $\pi(n-1)<\pi(n)$, or $1<i<n$ and $\pi(i-1)<\pi(i)>\pi(i+1)$. By swapping all signs $<$ and $>$ around, we obtain the notion of \emph{a minimal} point $i$ in $\pi$. Maximal and minimal points alternate and are jointly referred to as \emph{extremal}. The points $1$ and $n$ are always extremal. Clearly, all extremal points of $\pi$ form an alternating sequence in $\pi$. In fact, as shown by B\'{o}na (see \cite{StanleySurvey}, and \cite{HoudreR} for a proof), it is the longest one.

Let $E=\{j_1,\dots,j_k\}$ be the set of the extremal points in $\pi$.
 These points divide the whole range $[n]$ into monotone segments
$$\pi_i=(\pi(j_i),\pi(j_i+1),\dots,\pi(j_{i+1})),$$
$i=1,\dots,k-1$.

\subsection{Weak twins}

\begin{proof}[Proof of Theorem~\ref{thm_weak}, lower bound]
 For the lower bound, recall that the extremal points of a permutation $\pi$ partition it into monotone segments $\pi_1,\dots,\pi_{k-1}$.
 As the extremal points themselves form an alternating sub-sequence $E$ of $\pi$, by splitting it evenly, we obtain a pair of weak twins of length $\lfloor k/2\rfloor$. Thus, we may assume that $k-1\le n/6$, since otherwise $\lfloor k/2\rfloor\ge k/2-1/2\ge n/12$ and we are done.

 Let $Q_1,\dots, Q_\ell$ be those segments among $\pi_1,\dots,\pi_{k-1}$ which contain at least 4 elements each. It is easy to check that
 $$|Q_1|+\cdots+|Q_\ell|\ge \frac{1}{2}n.$$
 Indeed, otherwise we would have
 $$n=\sum_{i=1}^{k-1}|\pi_i|< 3(k-1)+\frac{1}{2}n\le n,$$
 a contradiction.
 All we need now is the following proposition.

 \begin{prop}\label{glue}
 One can find  weak twins in $Q_1\cup \cdots \cup Q_\ell$ of length at least
 $$\frac12\sum_{i=1}^\ell|Q_i|-\ell.$$
 \end{prop}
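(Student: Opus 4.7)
My plan is to build the weak twins $A,B$ by slicing each segment $Q_i$ (of length $q_i := |Q_i|$) into two disjoint monotone pieces $A_i, B_i\subseteq Q_i$ of equal length, and then concatenating
\[
A = A_1 A_2 \cdots A_\ell, \qquad B = B_1 B_2 \cdots B_\ell,
\]
viewed as subsequences of $\pi$. Because the positions spanned by $A_i$ and $B_i$ both lie inside $Q_i$, they precede those of $A_{i+1}$ and $B_{i+1}$, so both $A$ and $B$ are legitimate and automatically disjoint subsequences.

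For each $Q_i = (c_1, c_2, \ldots, c_{q_i})$ with $q_i$ even, I would discard the two extremal endpoints $c_1, c_{q_i}$ and split the interior $(c_2, \ldots, c_{q_i-1})$ into two contiguous halves $A_i = (c_2, \ldots, c_{q_i/2})$ and $B_i = (c_{q_i/2 + 1}, \ldots, c_{q_i - 1})$ of equal size $q_i/2 - 1$; for odd $q_i$, I would keep one of the two endpoints to make the number of available elements even and again split in half. Since $Q_i$ is monotone, each $A_i, B_i$ inherits its direction, so the intra-segment contributions to the shapes of $A$ and $B$ are identical (both are blocks of $+$'s or of $-$'s of the same length). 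A short count confirms
\[
|A| = |B| \;\geq\; \tfrac12\sum_{i=1}^\ell |Q_i| - \ell,
\]
matching the target.

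The hard part is the shape at the $\ell - 1$ segment boundaries: I need
\[
\operatorname{sgn}(\text{last of } A_i,\,\text{first of } A_{i+1}) = \operatorname{sgn}(\text{last of } B_i,\,\text{first of } B_{i+1})
\]
for every $i$. The four relevant boundary elements cluster near the extremal endpoints of $Q_i$ and $Q_{i+1}$, but their precise comparison can a priori go either way. My plan is a short case analysis on the monotone directions of $Q_i, Q_{i+1}$ and the relative position of $Q_i$'s boundary value within the value range of $Q_{i+1}$: when the two value ranges are separated, both transitions are forced to share the same sign and there is nothing to do; otherwise, I would locally shift the cut in $Q_i$ or $Q_{i+1}$ by one or two positions to move which interior element ends $A_i$ and starts $A_{i+1}$ (and symmetrically for $B$), forcing the two signs to agree. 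Each such local adjustment costs a bounded number of elements per boundary, which is comfortably absorbed in the $-\ell$ slack already budgeted into the bound.
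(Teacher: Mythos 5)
Your decomposition has a genuine gap at the segment boundaries, and the ``local shift'' repair you sketch cannot work within the budget of $O(1)$ lost elements per segment. The trouble is that splitting $Q_i$ into two \emph{contiguous} halves leaves the frontiers of the two twins far apart in value: $A_i$ ends near the median value of $Q_i$ while $B_i$ ends at its extreme value. An adversary can then place the next segment's values strictly between these two frontiers. Concretely, let $Q_1$ be increasing with value set $\{1,\dots,0.3n\}\cup\{0.9n,\dots,1.1n\}$ and let $Q_2$ be increasing with value set $\{0.3n+1,\dots,0.8n\}$. Then $A_1$ ends at a value near $0.15n$, $B_1$ ends near $1.1n$, $A_2$ starts near $0.3n$ and $B_2$ starts near $0.55n$, so the $A$-transition is forced to be $+$ while the $B$-transition is forced to be $-$. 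Because you must keep $|A_i|=|B_i|$ for the intra-segment runs of signs to line up coordinatewise, the only way to move either frontier is to discard elements, and here reconciling the two signs requires discarding $\Theta(n)$ elements from $Q_1$ or $Q_2$ --- not the bounded number your $-\ell$ slack allows. So the contiguous-halves scheme is not merely under-analyzed; it fails outright on such configurations.

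The paper's proof avoids exactly this by distributing each segment's elements \emph{alternately} (interleaved) between the two twins rather than contiguously, and by maintaining the invariant that the pair is ``aligned'': the last two elements of $A$ and the last two of $B$ interleave in both position and value, forming a monotone $4$-term subsequence (Claim~\ref{cl_glue}). With the two frontiers thus kept adjacent in value, the next monotone segment $Q$ can land in only a few qualitatively different positions relative to this tight pair, and a short case analysis (four cases for increasing $Q$, four for decreasing $Q$) shows one can always extend both twins simultaneously, losing at most $2$ elements per segment and restoring the alignment invariant. That interleaving-plus-invariant idea is the essential ingredient missing from your argument; if you want to salvage your plan, replace the contiguous split of each $Q_i$ by an alternating one and prove an alignment invariant of this kind.
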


 Before proving the proposition, let us finish the proof of the lower bound in \eqref{bounds_gen}. By Proposition \ref{glue}, there is in $\pi$ a pair of weak twins of length

 \begin{align*}
 \frac12\sum_{i=1}^\ell|Q_i|-\ell\ge\frac{1}{4}n-(k-1)\ge \frac{1}{4}n-\frac{1}{6}n=\frac n{12}.
 \end{align*}

\end{proof}

\begin{proof}[Proof of Proposition~\ref{glue}]  We begin with the following observation. We say that weak twins $(A,B)$, where $A=(\pi(i_1),\dots,\pi(i_k))$, $i_1<\cdots<i_k$, and $B=(\pi(j_1),\dots,\pi(j_k))$, $j_1<\cdots<j_k$, are \emph{aligned upward}, respectively, \emph{downward} if the  two right-most elements of $A$ and the  two right-most elements of $B$ interwind and form a monotone sub-sequence, that is, $j_{k-1}<i_{k-1}<j_k<i_k$ and $\pi(j_{k-1})<\pi(i_{k-1})<\pi(j_k)<\pi(i_k)$, or, respectively, $\pi(j_{k-1})>\pi(i_{k-1})>\pi(j_k)>\pi(i_k)$.

\begin{claim}\label{cl_glue}
Let $(A,B)$ be  aligned weak twins in $\pi$  and let $Q=(\pi(m_1),\dots,\pi(m_s))$, $s\ge4$, be a monotone sub-sequence of $\pi$ completely to the right of $(A,B)$, that is, $m_1>i_k$. Then one can extend $(A,B)$ to a new pair of aligned weak twins $(A',B')$ which contains all elements of $A,B$ and $Q$ except  for at most 2 elements. The lost elements are either all from $Q$ (the first or the last or both) or one from $Q$ (the last one) and one from $A$ (the last one).
\end{claim}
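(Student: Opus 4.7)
The plan is to build $(A',B')$ by interleaving the elements of $Q$ alternately into $A$ and $B$, with at most two small corrections that keep shapes matching, lengths balanced, and the final alignment in place. By the involution that reverses all inequalities, one may assume that $(A,B)$ is aligned upward, and I focus on the case $Q$ increasing (the decreasing case is dual, producing an aligned-downward output). The key observation is that, since $Q$ is monotone, any two consecutive elements of $Q$ landing on the same side of the interleaving contribute the same sign (here, $+$) to that side's extended shape; consequently the two extensions are automatically weakly similar past the very first $Q$-step, and the problem reduces to controlling (a) the two \emph{transition} signs created when the first $Q$-entry on each side is attached, (b) the equality of the lengths of $A'$ and $B'$, and (c) the last four used positions, which must form an aligned pattern.

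I plan to use three interleaving schemes, selected according to the positions of $\pi(m_1),\pi(m_2)$ relative to $\pi(j_k)<\pi(i_k)$. The \emph{default scheme} assigns $m_1\mapsto B$, $m_2\mapsto A$, $m_3\mapsto B,\dots$; a short case analysis shows that the two transition signs agree except in two ``failure'' configurations, namely case~(i) $\pi(j_k)<\pi(m_1)\le\pi(i_k)$ together with $\pi(m_2)\le\pi(i_k)$, and case~(ii) $\pi(m_1)\le\pi(j_k)$ together with $\pi(m_2)>\pi(i_k)$. In case~(i) I switch to the \emph{drop-$A$ scheme}: discard $\pi(i_k)$ from $A$ and assign $m_1\mapsto A$, $m_2\mapsto B$, $m_3\mapsto A,\dots$; the chain $\pi(i_{k-1})<\pi(j_k)<\pi(m_1)<\pi(m_2)$ then forces both transitions to be $+$, matching $B$'s last original $+$ inherited from the aligned-upward property. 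In case~(ii) I use the \emph{drop-first scheme}: discard $m_1$ and assign $m_2\mapsto B$, $m_3\mapsto A,\dots$; here $\pi(m_2)>\pi(i_k)>\pi(j_k)$ and $\pi(m_3)>\pi(m_2)>\pi(i_k)$ again make both transitions $+$.

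Parity is handled by an optional extra discard of $m_s$ whenever the naive alternation leaves the two sides unbalanced. In each scheme the overall loss is at most two elements, distributed in the manner permitted by the claim (either both losses from $Q$, or one from $Q$ together with $\pi(i_k)$), and the final four used positions---drawn from the tail of $Q$---form a strictly increasing sub-sequence in both index and value, giving an aligned-upward terminal configuration. The case $Q$ decreasing is handled by the mirror scheme (swap the roles of $+$ and $-$ throughout), producing aligned-downward twins at the end.

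The main obstacle is the case distinction itself: the default scheme fails precisely in the two mismatched configurations~(i) and~(ii), and one must show that each is remedied by exactly one of the two escape routes allowed by the statement---dropping the last element of $A$ in case~(i), and dropping the first element of $Q$ in case~(ii). Once this correspondence is set up, verifying the index order, the shape matching, the parity count, and the terminal alignment is mechanical.
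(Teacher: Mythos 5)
Your proposal is correct and follows essentially the same route as the paper: a case analysis on how $q_1,q_2$ sit relative to $b=\pi(j_k)$ and $a=\pi(i_k)$, with a default interleaving that works in two of the four configurations and the two repairs (drop $\pi(i_k)$, resp.\ drop $q_1$) in exactly the paper's Cases~4 and~2, plus the dual treatment of decreasing $Q$. The only cosmetic difference is that in the decreasing analogue of your case~(i) the dropped element is the last element of $B$ rather than of $A$, which (as the paper notes) is absorbed by swapping the labels $A'$ and $B'$.
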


Proposition~\ref{glue} follows quickly from the above claim. Indeed, by its repeated applications, beginning with selecting a pair of aligned weak twins $(A_1,B_1)$ within $Q_1$ (here we lose one element in the case when $|Q_1|$ is odd), we recursively construct the desired object losing along the way at most $1+2(\ell-1)<2\ell$ elements.
\end{proof}

\begin{proof}[Proof of Claim~\ref{cl_glue}] W.l.o.g., assume that the weak twins $(A,B)$ are aligned upward. However, with respect to $Q$,  we  have to consider both cases of its monotonicity. We first assume that $Q$ is  increasing.

We are going to examine 4 cases of how  the two bottom  values in $Q$ position themselves with respect to the two top ones in $(A,B)$ (see Figure~\ref{fig:claim:gluing}). Set $a=\pi(i_k)$, $\bar a=\pi(i_{k-1})$, $b=\pi(j_k)$, $\bar b=\pi(j_{k-1})$, and  $q_i=\pi(m_j)$, $j=1,2,\dots,s$. Recall that $\bar b<\bar a<b<a$.

\begin{figure}
\captionsetup[subfigure]{labelformat=empty}
\begin{center}

\begin{subfigure}[b]{0.48\textwidth}
\scalebox{0.9}
{
\centering
\begin{tikzpicture}
[line width = .5pt,
vtx/.style={circle,draw,black,very thick,fill=black, line width = 1pt, inner sep=2pt},
]

    \node[vtx] (a) at (3,3) {};
    \node[vtx] (a_bar) at (1,0.8) {};
    \node[vtx] (b) at (2.4,1.2) {};
    \node[vtx] (b_bar) at (0,0) {};
    \fill[fill=black, outer sep=1mm]  (a) circle (0.1) node [left] {$a$};
    \fill[fill=black, outer sep=1mm]  (a_bar) circle (0.1) node [left] {$\bar{a}$};
    \fill[fill=black, outer sep=1mm]  (b) circle (0.1) node [below] {$b$};
    \fill[fill=black, outer sep=1mm]  (b_bar) circle (0.1) node [below] {$\bar{b}$};
    \draw[line width=0.5mm, color=black, outer sep=2mm]   (a) -- (a_bar) node[pos=0.6, above] {$A$};
    \draw[line width=0.5mm, color=black, outer sep=1mm]   (b) -- (b_bar) node[pos=0.4, below] {$B$};

    \node[inner sep=0pt] (aq) at (6,3) {};
    \node[inner sep=0pt] (bq) at (6,1.2) {};
    \draw[line width=0.3mm, color=lightgray]  (a) -- (aq);
    \draw[line width=0.3mm, color=lightgray]  (b) -- (bq);

    \node[vtx] (q1) at (4, 0.2) {};
    \node[vtx] (q2) at (4.5, 2) {};
    \node[vtx] (qs) at (5.75, 6.5) {};

    \fill[fill=black, outer sep=1mm]  (q1) circle (0.1) node [right] {$q_1$};
    \fill[fill=black, outer sep=1mm]  (q2) circle (0.1) node [right] {$q_2$};
    \fill[fill=black, outer sep=1mm]  (qs) circle (0.1) node [right] {$q_s$};
    \draw[line width=0.5mm, color=black]  (q1) -- (q2);
    \draw[line width=0.5mm, color=black, dashed]  (q2) -- (qs) node[pos=0.5, right] {$Q$};

    \draw[line width=0.7mm, color=blue]   (a) -- (q2);
    \draw[line width=0.7mm, color=blue]   (b) -- (q1);

\end{tikzpicture}
}
\caption{Case 1}
\end{subfigure}
     \
\begin{subfigure}[b]{0.48\textwidth}
\scalebox{0.9}
{
\centering
\begin{tikzpicture}
[line width = .5pt,
vtx/.style={circle,draw,black,very thick,fill=black, line width = 1pt, inner sep=2pt},
]

    \node[vtx] (a) at (3,3) {};
    \node[vtx] (a_bar) at (1,0.8) {};
    \node[vtx] (b) at (2.4,1.2) {};
    \node[vtx] (b_bar) at (0,0) {};
    \fill[fill=black, outer sep=1mm]  (a) circle (0.1) node [left] {$a$};
    \fill[fill=black, outer sep=1mm]  (a_bar) circle (0.1) node [left] {$\bar{a}$};
    \fill[fill=black, outer sep=1mm]  (b) circle (0.1) node [below] {$b$};
    \fill[fill=black, outer sep=1mm]  (b_bar) circle (0.1) node [below] {$\bar{b}$};
    \draw[line width=0.5mm, color=black, outer sep=2mm]   (a) -- (a_bar) node[pos=0.6, above] {$A$};
    \draw[line width=0.5mm, color=black, outer sep=1mm]   (b) -- (b_bar) node[pos=0.4, below] {$B$};

    \node[inner sep=0pt] (aq) at (6,3) {};
    \node[inner sep=0pt] (bq) at (6,1.2) {};
    \draw[line width=0.3mm, color=lightgray]  (a) -- (aq);
    \draw[line width=0.3mm, color=lightgray]  (b) -- (bq);

    \node[vtx] (q1) at (4, 0.2) {};
    \node[vtx] (q2) at (5, 3.8) {};
    \node[vtx] (q3) at (5.5, 5.6) {};
    \node[vtx] (qs) at (5.75, 6.5) {};

    \fill[fill=black, outer sep=1mm]  (q1) circle (0.1) node [right] {$q_1$};
    \fill[fill=black, outer sep=1mm]  (q2) circle (0.1) node [right] {$q_2$};
    \fill[fill=black, outer sep=1mm]  (q3) circle (0.1) node [right] {$q_3$};
    \fill[fill=black, outer sep=1mm]  (qs) circle (0.1) node [right] {$q_s$};
    \draw[line width=0.5mm, color=black]  (q1) -- (q3) node[pos=0.85, right] {$Q$};;
   \draw[line width=0.5mm, color=black, dashed]  (q3) -- (qs);

    \draw[line width=0.7mm, color=blue]   (a) -- (q3);
    \draw[line width=0.7mm, color=blue]   (b) -- (q2);

\end{tikzpicture}
}
\caption{Case 2}
\end{subfigure}

\begin{subfigure}[b]{0.48\textwidth}
\scalebox{0.9}
{
\centering
\begin{tikzpicture}
[line width = .5pt,
vtx/.style={circle,draw,black,very thick,fill=black, line width = 1pt, inner sep=2pt},
]

    \node[vtx] (a) at (3,3) {};
    \node[vtx] (a_bar) at (1,0.8) {};
    \node[vtx] (b) at (2.4,1.2) {};
    \node[vtx] (b_bar) at (0,0) {};
    \fill[fill=black, outer sep=1mm]  (a) circle (0.1) node [left] {$a$};
    \fill[fill=black, outer sep=1mm]  (a_bar) circle (0.1) node [left] {$\bar{a}$};
    \fill[fill=black, outer sep=1mm]  (b) circle (0.1) node [below] {$b$};
    \fill[fill=black, outer sep=1mm]  (b_bar) circle (0.1) node [below] {$\bar{b}$};
    \draw[line width=0.5mm, color=black, outer sep=2mm]   (a) -- (a_bar) node[pos=0.6, above] {$A$};
    \draw[line width=0.5mm, color=black, outer sep=1mm]   (b) -- (b_bar) node[pos=0.4, below] {$B$};

    \node[inner sep=0pt] (aq) at (6,3) {};
    \node[inner sep=0pt] (bq) at (6,1.2) {};
    \draw[line width=0.3mm, color=lightgray]  (a) -- (aq);
    \draw[line width=0.3mm, color=lightgray]  (b) -- (bq);

    \node[vtx] (q1) at (4.5, 2) {};
    \node[vtx] (q2) at (5, 3.8) {};
    \node[vtx] (qs) at (5.75, 6.5) {};

    \fill[fill=black, outer sep=1mm]  (q1) circle (0.1) node [right] {$q_1$};
    \fill[fill=black, outer sep=1mm]  (q2) circle (0.1) node [right] {$q_2$};
    \fill[fill=black, outer sep=1mm]  (qs) circle (0.1) node [right] {$q_s$};
    \draw[line width=0.5mm, color=black]  (q1) -- (q2);
    \draw[line width=0.5mm, color=black, dashed]  (q2) -- (qs) node[pos=0.5, right] {$Q$};;

    \draw[line width=0.7mm, color=blue]   (a) -- (q2);
    \draw[line width=0.7mm, color=blue]   (b) -- (q1);

\end{tikzpicture}
}
\caption{Case 3}
\end{subfigure}
\
\begin{subfigure}[b]{0.48\textwidth}
\scalebox{0.9}
{
\centering
\begin{tikzpicture}
[line width = .5pt,
vtx/.style={circle,draw,black,very thick,fill=black, line width = 1pt, inner sep=2pt},
]

    \node[vtx] (a) at (3,3) {};
    \node[vtx] (a_bar) at (1,0.8) {};
    \node[vtx] (b) at (2.4,1.2) {};
    \node[vtx] (b_bar) at (0,0) {};
    \fill[fill=black, outer sep=1mm]  (a) circle (0.1) node [left] {$a$};
    \fill[fill=black, outer sep=1mm]  (a_bar) circle (0.1) node [left] {$\bar{a}$};
    \fill[fill=black, outer sep=1mm]  (b) circle (0.1) node [below] {$b$};
    \fill[fill=black, outer sep=1mm]  (b_bar) circle (0.1) node [below] {$\bar{b}$};
    \draw[line width=0.5mm, color=black, outer sep=2mm]   (a) -- (a_bar) node[pos=0.6, above] {$A$};
    \draw[line width=0.5mm, color=black, outer sep=1mm]   (b) -- (b_bar) node[pos=0.4, below] {$B$};

    \node[inner sep=0pt] (aq) at (6,3) {};
    \node[inner sep=0pt] (bq) at (6,1.2) {};
    \draw[line width=0.3mm, color=lightgray]  (a) -- (aq);
    \draw[line width=0.3mm, color=lightgray]  (b) -- (bq);

    \node[vtx] (q1) at (4.25+0.125, 1.1+0.45) {};
    \node[vtx] (q2) at (4.625, 2.45) {};
    \node[vtx] (qs) at (5.75, 6.5) {};

    \fill[fill=black, outer sep=1mm]  (q1) circle (0.1) node [right] {$q_1$};
    \fill[fill=black, outer sep=1mm]  (q2) circle (0.1) node [right] {$q_2$};
    \fill[fill=black, outer sep=1mm]  (qs) circle (0.1) node [right] {$q_s$};
    \draw[line width=0.5mm, color=black]  (q1) -- (q2);
    \draw[line width=0.5mm, color=black, dashed]  (q2) -- (qs) node[pos=0.5, right] {$Q$};;

    \draw[line width=0.7mm, color=blue]   (b) -- (q2);
    \draw[line width=0.7mm, color=blue]   (a_bar) .. controls (3,2) .. (q1);

\end{tikzpicture}
}
\caption{Case 4}
\end{subfigure}

\end{center}

\caption{Extending twins in the proof of Claim~\ref{cl_glue} with increasing $Q$.}
\label{fig:claim:gluing}
\end{figure}
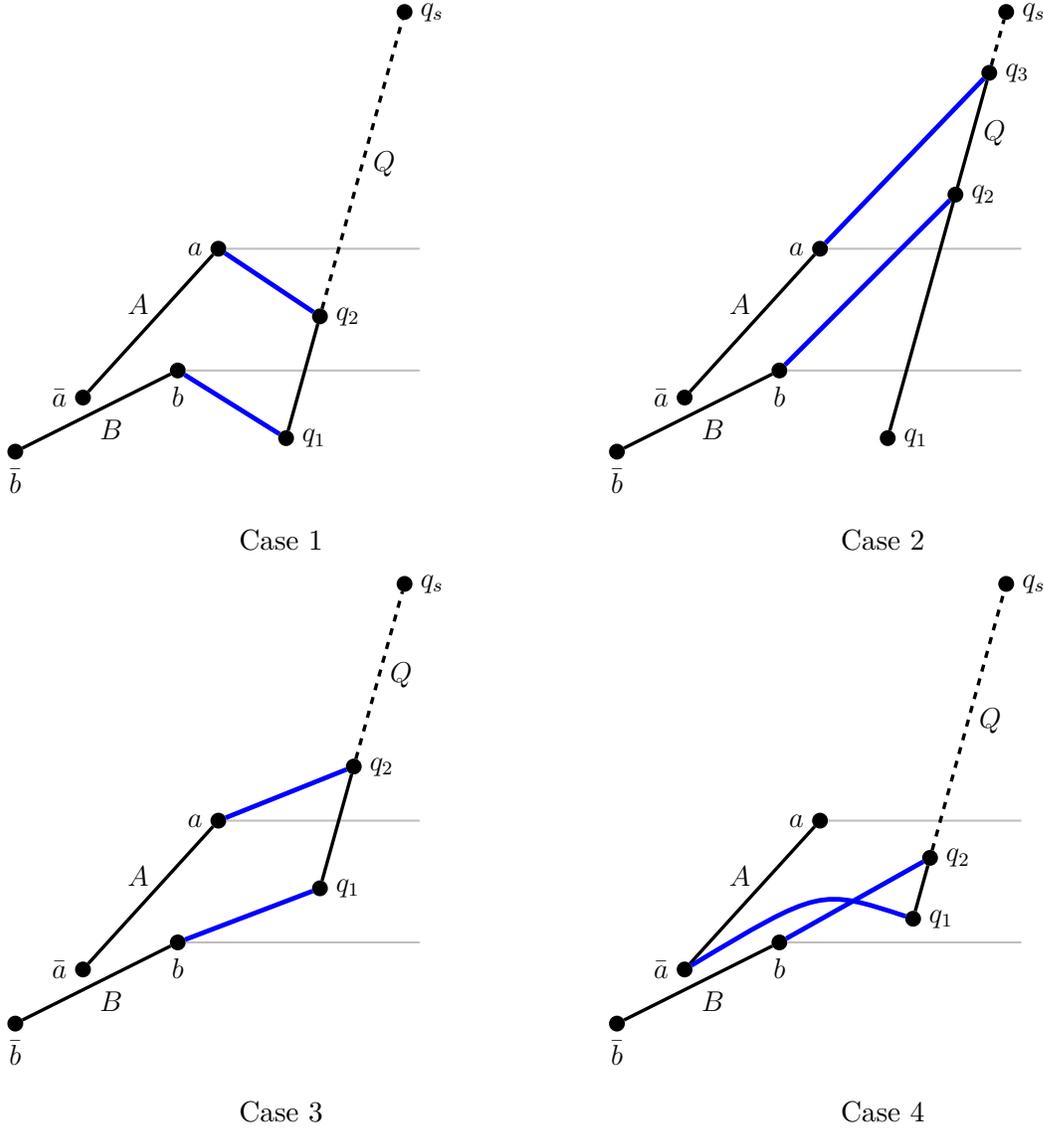

\textbf{Case 1: $q_1<b, q_2<a$.} We extend $A$ and $B$ as follows:
$$A'=A,q_2,q_4,\dots,\quad B'=B,q_1,q_3,\dots.$$
If $s$ is odd, the point $q_s$ is not used (we say it is lost). Note that due to the order of $q_1,q_2,q_3,q_4$, the new pair $(A',B')$ is indeed aligned.

\textbf{Case 2: $q_1<b<a<q_2$.} Here we set
$$A'=A,q_3,q_5,\dots,\quad B'=B,q_2,q_4,\dots.$$
We definitely lose $q_1$ and, if $s$ is even, we also lose $q_s$. For $s=4$ or $s=5$, the last 4 points of $(A',B')$ are thus $b,a,q_2,q_3$, which  are aligned upward. If $s\ge 6$, then $(A',B')$ is aligned as well.

\textbf{Case 3: $q_1>b, q_2>a$.} This case is very similar to Case 1, so we omit the details.

\textbf{Case 4: $b<q_1<q_2<a$.} This is the only case when we lose a point of $(A,B)$. Let $A^-$ denote the sub-sequence $A$ without the last element, $a$. We set
$$A'=A^-,q_1,q_3,\dots,\quad B'=B,q_2,q_4,\dots.$$
Besides $a$, we may also lose $q_s$, provided $s$ is even. Observe that for $s=4$, $b,q_1,q_2,q_3$ are aligned upward.
This exhaust the case when $Q$ is increasing.

For decreasing $Q$, there are also 4 cases to examine. However, three of them, namely,  (i) $a>q_1, b>q_2$,
  (ii) $q_1>a>b>q_2$, and (iii) $q_1>a, q_2>b$ are very similar to those for increasing $Q$, so we leave them for the reader. The only somewhat different case is when (iv) $a>q_1>q_2>b$ (see Figure~\ref{fig:claim:gluing:decr}). Then, denoting by $B^-$ the sub-sequence $B$ without its last element, $b$, we set
$$A'=A,q_2,q_4,\dots,\quad B'=B^-,q_1,q_3,\dots.$$
Besides $b$, we may also lose $q_s$, provided $s$ is even. Finally, observe that for $s=4$, $a,q_1,q_2,q_3$ are aligned downward, though with the roles of $A'$ and $B'$ switched (which does not really matter to us; formally we should swap $A'$ and $B'$ around).

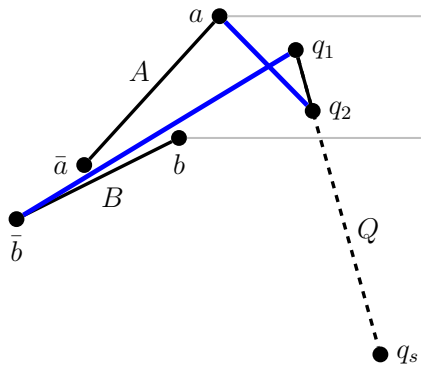
\begin{figure}
\captionsetup[subfigure]{labelformat=empty}
\begin{center}

\scalebox{0.9}
{
\centering
\begin{tikzpicture}
[line width = .5pt,
vtx/.style={circle,draw,black,very thick,fill=black, line width = 1pt, inner sep=2pt},
]

    \node[vtx] (a) at (3,3) {};
    \node[vtx] (a_bar) at (1,0.8) {};
    \node[vtx] (b) at (2.4,1.2) {};
    \node[vtx] (b_bar) at (0,0) {};
    \fill[fill=black, outer sep=1mm]  (a) circle (0.1) node [left] {$a$};
    \fill[fill=black, outer sep=1mm]  (a_bar) circle (0.1) node [left] {$\bar{a}$};
    \fill[fill=black, outer sep=1mm]  (b) circle (0.1) node [below] {$b$};
    \fill[fill=black, outer sep=1mm]  (b_bar) circle (0.1) node [below] {$\bar{b}$};
    \draw[line width=0.5mm, color=black, outer sep=2mm]   (a) -- (a_bar) node[pos=0.6, above] {$A$};
    \draw[line width=0.5mm, color=black, outer sep=1mm]   (b) -- (b_bar) node[pos=0.4, below] {$B$};

    \node[inner sep=0pt] (aq) at (6,3) {};
    \node[inner sep=0pt] (bq) at (6,1.2) {};
    \draw[line width=0.3mm, color=lightgray]  (a) -- (aq);
    \draw[line width=0.3mm, color=lightgray]  (b) -- (bq);


    \node[vtx] (q1) at (4.125, 2.5) {};
    \node[vtx] (q2) at (4.125+0.25, 2.5 - 0.9) {};
    \node[vtx] (qs) at (4.125+0.25+1, 2.5 - 0.9-3.6) {};

   \fill[fill=black, outer sep=1mm]  (q1) circle (0.1) node [right] {$q_1$};
   \fill[fill=black, outer sep=1mm]  (q2) circle (0.1) node [right] {$q_2$};

    \fill[fill=black, outer sep=1mm]  (qs) circle (0.1) node [right] {$q_s$};
    \draw[line width=0.5mm, color=black]  (q1) -- (q2);
    \draw[line width=0.5mm, color=black, dashed]  (q1) -- (qs) node[pos=0.6, right] {$Q$};;

    \draw[line width=0.7mm, color=blue]   (b_bar) -- (q1);
    \draw[line width=0.7mm, color=blue]   (a) -- (q2);

\end{tikzpicture}
}

\end{center}

\caption{Extending twins in the proof of Claim~\ref{cl_glue} with decreasing $Q$.}
\label{fig:claim:gluing:decr}
\end{figure}

\end{proof}

\bigskip
\begin{proof}[Proof of Theorem~\ref{thm_weak}, upper bound] We are going to construct a permutation $\pi$ on $[n]$, $n$ large enough, with no weak twins longer than $n/2-cn^{1/3}$ for some $c>0$. This permutation will consist of  $k'\le k:=\lceil n^{1/3}\rceil$ consecutive increasing segments $P_1,\dots,P_{k'}$ with $\max P_{i+1}<\min P_i$, of diminishing lengths which have to be chosen carefully. For $i=1,\dots,k$, set
$$x_i=2k^2-2(i-1)k-1.$$
Note that for all $i=1,\dots,k$, $x_i>0$ and $x_i$ is an odd integer. Moreover,
$$\sum_{i=1}^kx_i=2k^3-2k\binom k2-k=k^3+k^2-k>n.$$
Let $k'=\min\{j: \sum_{i=1}^jx_i\ge n\}$. Then we set $|P_i|=x_i$, $i=1,\dots,k'-1$, and \newline $|P_{k'}|=n-\sum_{i=1}^{k'-1}x_i$.
Since $\sum_{i=1}^kx_i=n+O(k^2)$, with a big margin we have, say,  $k'\ge 0.99k$. Also, what is crucial here, for all $i=1,\dots,k'-1$, we have $x_i-x_{i+1}\ge 2k$, in fact, with equality except for $i=k'-1$.

So, we define $\pi=(P_1,\dots,P_{k'})$ in the following manner. We set
$$\pi(1)=n-x_1+1,\pi(2)=n-x_1+2,\dots,\pi(x_1)=n\quad\mbox{and}\quad A_1=(\pi(1),\dots,\pi(x_1)).$$
 Then we dip down and set
$$\pi(x_1+1)=n-x_1-x_2+1,\pi(x_1+2)=n-x_1-x_2+2,\dots,\pi(x_1+x_2)=n-x_1$$
and
$$ A_2=(\pi(x_1+1),\dots,\pi(x_1+x_2)),$$
 and so on, and so forth.

 We now state a proposition from which the desired bound follows quickly.

\begin{prop}\label{equal} Let $(A,B)$ be  weak twins in the permutation $\pi$ defined above of length $|A|=|B|\ge n/2-k/3$.
Then, for all $1\le i< k'$, we have $|A\cap P_i|=|B\cap P_i|$.
\end{prop}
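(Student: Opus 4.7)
The plan is to convert the weak-similarity hypothesis on $(A,B)$ into rigid equations on the segment occupancies $a_i:=|A\cap P_i|$ and $b_i:=|B\cap P_i|$, and then play the engineered $2k$-gap between consecutive segment sizes $x_i$ against the small ``waste'' budget forced by $|A|+|B|\ge n-2k/3$.

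First, I will read the shape of $A$ directly off the numbers $a_i$. Since $\pi$ is increasing on each $P_i$ and $\max P_{i+1}<\min P_i$, a sub-sequence of $\pi$ has an ascent precisely when two consecutive chosen elements lie in the same $P_i$, and a descent precisely when they cross to a later segment. Listing $i_1<\cdots<i_r$ as the indices with $a_i\ge 1$, the shape of $A$ is therefore
$$\underbrace{+,\dots,+}_{a_{i_1}-1},\,-,\,\underbrace{+,\dots,+}_{a_{i_2}-1},\,-,\,\ldots,\,-,\,\underbrace{+,\dots,+}_{a_{i_r}-1},$$
and an analogous formula holds for $B$ via indices $j_1<\cdots<j_{r'}$ and counts $b_{j_t}$. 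Comparing the two sign-strings forces $r=r'$ (same number of minus signs) and $a_{i_t}=b_{j_t}$ for every $t=1,\dots,r$ (matching plus-run lengths together with $|A|=|B|$).

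Now suppose for contradiction that $a_i\ne b_i$ for some $i$, and take the smallest $t$ with $i_t\ne j_t$; by the symmetry between $A$ and $B$, I may assume $i_t<j_t$. Minimality of $t$ forces $i_t\notin\{j_1,\dots,j_r\}$, hence $b_{i_t}=0$. Using $a_{i_t}=b_{j_t}\le |P_{j_t}|\le x_{j_t}$ together with the built-in gap $x_i-x_{i+1}\ge 2k$, I obtain
$$a_{i_t}\le x_{j_t}\le x_{i_t}-2k(j_t-i_t)\le x_{i_t}-2k.$$
Since $i_t<j_t\le k'$, we have $|P_{i_t}|=x_{i_t}$, so the waste at this segment satisfies $|P_{i_t}|-a_{i_t}-b_{i_t}\ge 2k$. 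But the $P_i$'s partition $[n]$, so
$$\sum_{i=1}^{k'}\bigl(|P_i|-a_i-b_i\bigr)\;=\;n-|A|-|B|\;\le\;\tfrac{2k}{3},$$
and no individual nonnegative summand can reach $2k$ --- contradiction, which forces $a_i=b_i$ for every $1\le i<k'$.

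The only delicate point is the boundary segment $P_{k'}$, whose length may be strictly less than $x_{k'}$; my assignment of $|P_{i_t}|=x_{i_t}$ relies on $i_t<k'$, which is guaranteed by the strict inequality $i_t<j_t\le k'$. Apart from this bookkeeping, the whole argument is a tight translation between the shape of a sub-permutation and its occupancy vector $(a_1,\dots,a_{k'})$, exploiting the carefully tuned staircase structure of $\pi$; the key creative step is the very first one, identifying that weak similarity reduces to a coordinate-wise equality of run-length vectors.
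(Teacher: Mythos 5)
Your proof is correct. It rests on the same two pillars as the paper's own argument --- the observation that in this staircase permutation the shape of a sub-permutation is exactly the run-length vector of its segment occupancies, and the tension between the engineered gap $x_i-x_{i+1}\ge 2k$ and the budget of at most $2k/3$ omitted points --- but where the paper runs a strong induction over the segments $P_i$, you package everything into a single global step: the explicit correspondence $a_{i_t}=b_{j_t}$ between matching runs, a minimal mismatched index, and the waste identity $\sum_i\bigl(|P_i|-a_i-b_i\bigr)=n-|A|-|B|\le 2k/3$, which makes precise (and arguably cleaner) a comparison the paper's induction step leaves somewhat implicit.
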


Before proving the proposition, let us finish the proof of the upper bound in Theorem~\ref{thm_weak}. Suppose there is in $\pi$ a pair of weak twins of length at least $n/2-k/2$. Since for all $1\le i<k'$, $|P_i|$ is odd, in view of Proposition \ref{equal}, at least one point of each such $P_i$ is missing from $(A,B)$. Hence,
\[
|A|=|B|\le n/2-(k'-1)/2\le n/2 - (0.99k-1)/2<  n/2-k/3,
\]
a contradiction.
\end{proof}

\begin{proof}[Proof of Proposition~\ref{equal}] We proceed by (strong) induction on $i=1,\dots, k'-1$. Let us start with the base case $i=1$. Since at most $2k/3$ points of $\pi$ are not in $A\cup B$, while $|P_1|>2k/3$, w.l.o.g., $A\cap P_1\neq\emptyset$. It suffices to prove that also $B\cap P_1\neq\emptyset$, since then, due to the fact that the rest of $\pi$ lies totally below $P_1$, $A$ and $B$ must have the same number of elements in $P_1$. Suppose to the contrary that $B\cap P_1=\emptyset$. But then
$$|A\cap P_1|\ge|P_1|-\frac23k>|P_2|>|P_3|>\cdots,$$
so $A$ begins with a longer increasing segment than $B$ does, a contradiction with the notion of weak  twins.

For the induction step, which is similar to the base step, assume that $|A\cap P_j|=|B\cap B_j|$, for $j=1,\dots,i\le k'-2$. If $|A\cap P_{i+1}|=|B\cap B_{i+1}|=0$, then we are done.
W.l.o.g., assume that $|A\cap P_{i+1}|>0$. As before, it suffices to show that also $|B\cap P_{i+1}|>0$. Suppose otherwise.
Then, since at most $2k/3$ points of $\pi$ are not in $A\cup B$, we have
$$|A\cap P_{i+1}|\ge|P_{i+1}|-\frac23k>|P_{i+2}|>\cdots.$$
This means, however, that $A$ and $B$ will differ in the length of the first increasing segment commencing to the right of the point $\sum_{j=1}^ix_j$. This yields a contradiction with $(A,B)$ being weak twins and completes the proof.
\end{proof}

\subsection{Alternating weak twins} Recall that the extremal points of $\pi$ form an alternating sub-sequence. In the proof of the lower bound in  Theorem~\ref{lem_ran_alt}, we are going to use this fact and then reiterate it for the sub-permutation $\pi'$ obtained from $\pi$ by removing all the extremal points of $\pi$. As a crucial tool we invoke  the Azuma-Hoeffding inequality for random permutations (see, e.g., Lemma 11 in~\cite{FP} or  Section 3.2 in~\cite{McDiarmid98})
\begin{theorem}\label{azuma}
 Let $h(\pi)$ be a function of $n$-permutations such that if permutation $\pi_2$ is obtained from permutation $\pi_1$ by swapping two elements, then $|h(\pi_1)-h(\pi_2)|\le 1$.
Then, for every $\eta>0$,
\[
\PP(|h(\Pi_n)-\E[h(\Pi_n)]|\ge \eta)\le 2\exp(-\eta^2/(2n)).
\]
\end{theorem}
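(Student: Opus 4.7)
The plan is to establish this concentration bound via the martingale method of bounded differences. First, I would introduce the natural Doob martingale adapted to the left-to-right reveal of the permutation: writing $\Pi_n=(X_1,\dots,X_n)$, let $\cF_k=\sigma(X_1,\dots,X_k)$ and set
$$M_k:=\E[h(\Pi_n)\mid\cF_k],\qquad k=0,1,\dots,n,$$
so that $M_0=\E[h(\Pi_n)]$ is deterministic, $M_n=h(\Pi_n)$, and $(M_k)$ is a martingale with respect to $(\cF_k)$.

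The central step is to show that $|M_k-M_{k-1}|\le 1$ almost surely. I would condition on a prefix $X_1=x_1,\dots,X_{k-1}=x_{k-1}$; the set $S$ of values still to be placed is then determined, and conditional on this prefix each permutation of $S$ over positions $k,\dots,n$ is equally likely. For any two distinct candidates $a,b\in S$ for $X_k$, there is a natural measure-preserving bijection between the conditional distributions $\{\Pi_n\mid\cF_{k-1},X_k=a\}$ and $\{\Pi_n\mid\cF_{k-1},X_k=b\}$: pair each completion $\sigma$ with $\sigma(k)=a$ and $\sigma(j)=b$ (for the unique $j>k$ with $\sigma(j)=b$) with the completion $\sigma'$ obtained by swapping the values $a$ and $b$. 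The hypothesis gives $|h(\sigma)-h(\sigma')|\le 1$, hence
$$\bigl|\E[h(\Pi_n)\mid\cF_{k-1},X_k=a]-\E[h(\Pi_n)\mid\cF_{k-1},X_k=b]\bigr|\le 1,$$
and since $M_k$ conditional on $\cF_{k-1}$ takes one of the values $\E[h\mid\cF_{k-1},X_k=a]$ while $M_{k-1}$ is their uniform average, $|M_k-M_{k-1}|\le 1$ follows.

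With a martingale whose increments are bounded by $1$, the classical Azuma--Hoeffding inequality for martingales gives
$$\PP\bigl(|M_n-M_0|\ge\eta\bigr)\le 2\exp\!\left(-\frac{\eta^2}{2\sum_{k=1}^n 1}\right)=2\exp\bigl(-\eta^2/(2n)\bigr),$$
which is exactly the stated bound since $M_n-M_0=h(\Pi_n)-\E[h(\Pi_n)]$. The main obstacle is the bounded-difference step: one has to verify carefully that the single transposition of values produced by the coupling matches the precise ``swap of two elements'' assumed by the Lipschitz hypothesis, and that the standard independent-coordinate method of bounded differences really does adapt to the permutation setting (where the coordinates $X_1,\dots,X_n$ are strongly dependent). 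The measure-preserving coupling described above is exactly what makes this adaptation go through.
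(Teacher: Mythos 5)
Your proof is correct. Note that the paper does not actually prove Theorem \ref{azuma}: it invokes it as a known tool, citing Lemma 11 of \cite{FP} and Section 3.2 of \cite{McDiarmid98}. The Doob-martingale argument you give --- revealing the permutation left to right, coupling the conditional laws given $X_k=a$ and $X_k=b$ by the transposition of the values $a$ and $b$ to get increments bounded by $1$, and then applying the classical Azuma--Hoeffding inequality --- is exactly the standard proof appearing in those references, so your write-up supplies the omitted proof rather than diverging from it.
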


\begin{proof}[Proof of Theorem~\ref{lem_ran_alt}, lower bound]  We are going to show that extremal points are evenly distributed in both `halves' of $\Pi_n$. For mere convenience, we assume that $n$ is even.

Let $X_1$ and $X_2$ be the numbers of extremal points in $\Pi_n$ among, respectively, $\{1,\dots, n/2\}$ and $\{n/2+1,\dots,n\}$. Note that the probability that a given point $i$, $2\le i\le n-1$, is extremal is $2\times\tfrac13=\tfrac23$. Thus,
$$\E(X_1)=\E(X_2)=1+\left(\frac n2-1\right)\times\frac23=\frac{n+1}3.$$
Now we apply Theorem \ref{azuma} to show that this expectation is highly concentrated about its mean.
To verify the Lipschitz assumption, note that if $\pi_2$ is obtained from a permutation $\pi_1$ by swapping any two of its  elements, then trivially $|X_j(\pi_1)-X_j(\pi_2)|\le 6$, $j=1,2$. (A detailed analysis shows that 6 can be replaced by 4 which is optimal.)
Consequently, Theorem~\ref{azuma} applied with $h(\pi)=X_j(\pi)/6$ and $\eta=n^{3/5}$ implies
\[
\PP(|X_j(\Pi_n)-\E[X_j(\Pi_n)]|\ge n^{3/5})=o(1)
\]
implying that a.a.s. $X_j=(1+o(1))\tfrac n3$, $j=1,2$.

It is quite hard to characterize the extremal points of $\pi'$. Unable to do so, we instead identify a 6-point configuration in $\pi$ which contains an extremal point of $\pi'$. A 6-tuple $\{i,i+1,i+2,i+3,i+4,i+5\}$, $1\le i\le n-5$, is called \emph{a lucky six} if $\pi(i)<\pi(i+1)< \pi(i+2)<\pi(i+3)>\pi(i+4)>\pi(i+5)$ and $\pi(i+2)>\pi(i+4)$, or when all signs $<$ and $>$ are swapped. It should be clear that in a lucky six  $i+3$ is an extremal point of $\pi$ and, most importantly, $i+2$ is an extremal point in $\pi'$. Of course, the same property is enjoyed by the symmetrical structures where $\pi(i)<\pi(i+1)< \pi(i+2)>\pi(i+3)>\pi(i+4)>\pi(i+5)$ and $\pi(i+1)<\pi(i+3)$ (and, again, with signs $<$ and $>$  swapped). So, we also call them \emph{lucky sixes}. See Figure~\ref{fig:lucky} for all 4 types of lucky sixes.

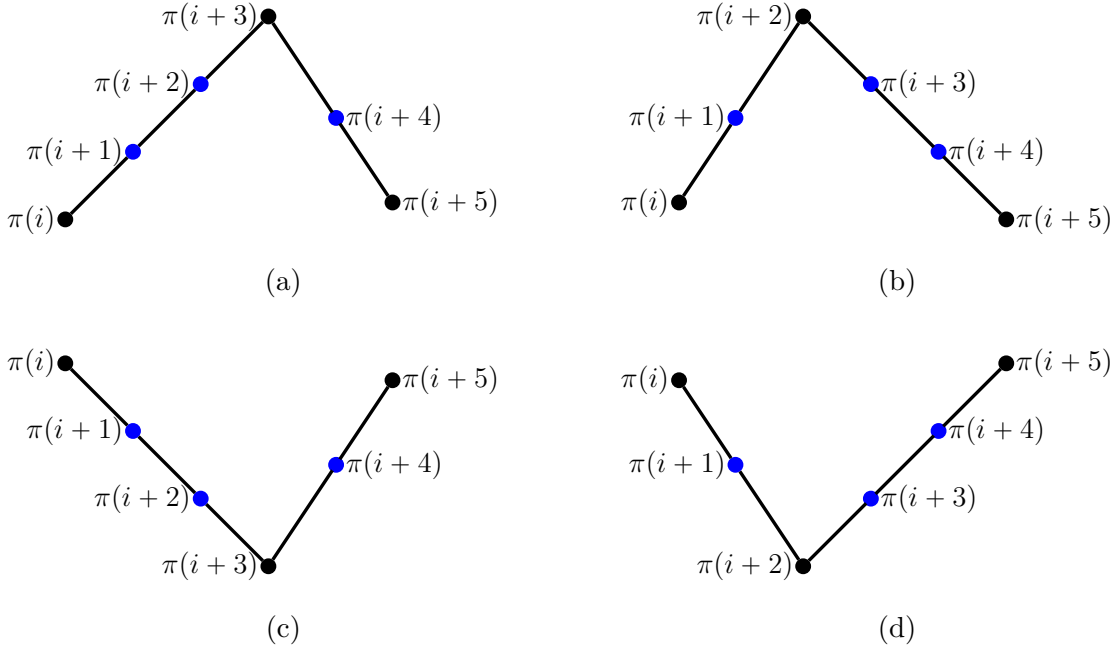
\begin{figure}[t]

\begin{subfigure}[b]{0.49\textwidth}
\scalebox{0.9}
{
\begin{tikzpicture}
[line width = .5pt,
vtx/.style={circle,draw,black,very thick,fill=black, line width = 1pt, inner sep=2pt},
vtx2/.style={circle,draw,blue,very thick,fill=black, line width = 1pt, inner sep=2pt},
]

    \node[vtx] (v1) at (0,0) {};
    \node[vtx2] (v2) at (1,1) {};
    \node[vtx2] (v3) at (2,2) {};
    \node[vtx] (v4) at (3,3) {};
    \node[vtx2] (v5) at (4,1.5) {};
    \node[vtx] (v6) at (4.8333,0.25) {};

    \fill[fill=black] (v1) circle (0.1) node [left] {$\pi(i)$};
    \draw[line width=0.5mm, color=black]  (v1) -- (v2);
    \fill[fill=blue] (v2) circle (0.1) node [left] {$\pi(i+1)$};
    \draw[line width=0.5mm, color=black]  (v2) -- (v3);
    \fill[fill=blue] (v3) circle (0.1) node [left] {$\pi(i+2)$};
    \draw[line width=0.5mm, color=black]  (v3) -- (v4);
    \fill[fill=black] (v4) circle (0.1) node [left] {$\pi(i+3)$};
    \draw[line width=0.5mm, color=black]  (v4) -- (v6);
    \fill[fill=blue] (v5) circle (0.1) node [right] {$\pi(i+4)$};
    \fill[fill=black] (v6) circle (0.1) node [right] {$\pi(i+5)$};
\end{tikzpicture}
}
\caption[a]{}
\label{fig:lucky:1}
\end{subfigure}
\
\begin{subfigure}[b]{0.49\textwidth}
\scalebox{0.9}
{
\begin{tikzpicture}
[line width = .5pt,
vtx/.style={circle,draw,black,very thick,fill=black, line width = 1pt, inner sep=2pt},
vtx2/.style={circle,draw,blue,very thick,fill=black, line width = 1pt, inner sep=2pt},
]

    \node[vtx] (v1) at (0,0.25) {};
    \node[vtx2] (v2) at (0.8333,1.5) {};
    \node[vtx] (v3) at (1.8333,3) {};
    \node[vtx2] (v4) at (2.8333,2) {};
    \node[vtx2] (v5) at (3.8333,1) {};
    \node[vtx] (v6) at (4.8333,0) {};

    \fill[fill=black] (v1) circle (0.1) node [left] {$\pi(i)$};
    \draw[line width=0.5mm, color=black]  (v1) -- (v2);
    \fill[fill=blue] (v2) circle (0.1) node [left] {$\pi(i+1)$};
    \draw[line width=0.5mm, color=black]  (v2) -- (v3);
    \fill[fill=black] (v3) circle (0.1) node [left] {$\pi(i+2)$};
    \draw[line width=0.5mm, color=black]  (v3) -- (v6);
    \fill[fill=blue] (v4) circle (0.1) node [right] {$\pi(i+3)$};
    \fill[fill=blue] (v5) circle (0.1) node [right] {$\pi(i+4)$};
    \fill[fill=black] (v6) circle (0.1) node [right] {$\pi(i+5)$};

\end{tikzpicture}
}
\caption[b]{}
\label{fig:lucky:2}
\end{subfigure}

\bigskip

\begin{subfigure}[b]{0.49\textwidth}
\scalebox{0.9}
{
\begin{tikzpicture}
[line width = .5pt,
vtx/.style={circle,draw,black,very thick,fill=black, line width = 1pt, inner sep=2pt},
vtx2/.style={circle,draw,blue,very thick,fill=black, line width = 1pt, inner sep=2pt},
]

    \node[vtx] (v1) at (0,3) {};
    \node[vtx2] (v2) at (1,2) {};
    \node[vtx2] (v3) at (2,1) {};
    \node[vtx] (v4) at (3,0) {};
    \node[vtx2] (v5) at (4,1.5) {};
    \node[vtx] (v6) at (4.8333,2.75) {};

    \fill[fill=black] (v1) circle (0.1) node [left] {$\pi(i)$};
    \draw[line width=0.5mm, color=black]  (v1) -- (v2);
    \fill[fill=blue] (v2) circle (0.1) node [left] {$\pi(i+1)$};
    \draw[line width=0.5mm, color=black]  (v2) -- (v3);
    \fill[fill=blue] (v3) circle (0.1) node [left] {$\pi(i+2)$};
    \draw[line width=0.5mm, color=black]  (v3) -- (v4);
    \fill[fill=black] (v4) circle (0.1) node [left] {$\pi(i+3)$};
    \draw[line width=0.5mm, color=black]  (v4) -- (v6);
    \fill[fill=blue] (v5) circle (0.1) node [right] {$\pi(i+4)$};
    \fill[fill=black] (v6) circle (0.1) node [right] {$\pi(i+5)$};

\end{tikzpicture}
}
\caption[c]{}
\label{fig:lucky:3}
\end{subfigure}
\
\begin{subfigure}[b]{0.49\textwidth}
\scalebox{0.9}
{
\begin{tikzpicture}
[line width = .5pt,
vtx/.style={circle,draw,black,very thick,fill=black, line width = 1pt, inner sep=2pt},
vtx2/.style={circle,draw,blue,very thick,fill=black, line width = 1pt, inner sep=2pt},
]

    \node[vtx] (v1) at (0,2.75) {};
    \node[vtx2] (v2) at (0.8333,1.5) {};
    \node[vtx] (v3) at (1.8333,0) {};
    \node[vtx2] (v4) at (2.8333,1) {};
    \node[vtx2] (v5) at (3.8333,2) {};
    \node[vtx] (v6) at (4.8333,3) {};

    \fill[fill=black] (v1) circle (0.1) node [left] {$\pi(i)$};
    \draw[line width=0.5mm, color=black]  (v1) -- (v2);
    \fill[fill=blue] (v2) circle (0.1) node [left] {$\pi(i+1)$};
    \draw[line width=0.5mm, color=black]  (v2) -- (v3);
    \fill[fill=black] (v3) circle (0.1) node [left] {$\pi(i+2)$};
    \draw[line width=0.5mm, color=black]  (v3) -- (v6);
    \fill[fill=blue] (v4) circle (0.1) node [right] {$\pi(i+3)$};
    \fill[fill=blue] (v5) circle (0.1) node [right] {$\pi(i+4)$};
    \fill[fill=black] (v6) circle (0.1) node [right] {$\pi(i+5)$};

\end{tikzpicture}
}
\caption[d]{}
\label{fig:lucky:4}
\end{subfigure}

\caption{Lucky sixes. The blue points appear in $\pi'$ as consecutive ones.}
\label{fig:lucky}
\end{figure}

Let $Y_1$ and $Y_2$  be the numbers of lucky sixes $\{i,i+1,i+2,i+3,i+4,i+5\}$ in $\Pi_n$ for, respectively, $1\le i\le n/2-3$ and $n/2-1\le i\le n-5$. Note that the probability that a given 6-tuple  is a lucky six is
$$4\times \frac{\binom 42}{6!}=\frac1{30}.$$
Indeed, considering, for instance, the number of ways to label by $1,\dots,6$, the lucky six in Figure~\ref{fig:lucky:1}, there is no question that 6 must be at the top, while 5 to its left. The remaining 4 values can be, however, distributed freely between the two pairs, $i,i+1$ and $i+4,i+5$. This explains $\binom 42$.
Thus,
$$\E(Y_1)=\E(Y_2)\sim\frac n{60}.$$
Again, a standard application of the Azuma inequality (Theorem~\ref{azuma}) yields that a.a.s. $Y_j=(1+o(1))\tfrac n{60}$, $j=1,2$.

Let $A_j$, $j=1,2$, be the alternating sub-sequences  in, respectively, $\{1,\dots, n/2\}$ and $\{n/2+1,\dots,n\}$, consisting of the extremal points of $\Pi_n$. Further, let $B_j$, $j=1,2$, be  alternating sub-sequences  in, respectively, $\{1,\dots, n/2\}$ and $\{n/2+1,\dots,n\}$,
consisting of he extremal points of $\Pi'_n$. By losing at most one point each, one can concatenate $A_j$ with $B_{3-j}$, $j=1,2$, obtaining the desired pair of alternating twins.
Noting that $|A_j\cup B_{3-j}|\sim \tfrac n3+\tfrac n{60}$ completes the proof of the lower bound in~\eqref{bounds}.
\end{proof}

\bigskip

\begin{proof}[Proof of Theorem~\ref{lem_ran_alt}, upper bound]
For the proof of the upper bound we need to consider two kinds of special 5-tuples.  A 5-tuple $\{i,i+1,i+2,i+3,i+4\}$ is called \emph{cornered} if either the first or the last four consecutive points form a monotone sub-sequence but all five do not (see Figure~\ref{fig:cornered}). A 5-tuple $\{i,i+1,i+2,i+3,i+4\}$ is called \emph{crooked} if the three middle points form a monotone sub-sequence but no four points do (see Figure~\ref{fig:crooked}).  Given a permutation $\pi$, let $e(\pi)$ be the number of extremal points in $\pi$, and let $co(\pi)$ and $cr(\pi)$ be, respectively, the number of cornered 5-tuples and the number of crooked 5-tuples in~$\pi$. The following crucial lemma sets an upper bound on the number of elements in two disjoint alternating sub-sequences of $\pi$ in terms of the three defined above parameters.

\begin{figure}[t]

\scalebox{0.9}
{

\begin{tikzpicture}
[line width = .5pt,
vtx/.style={circle,draw,black,very thick,fill=black, line width = 1pt, inner sep=2pt},
]

    \node[vtx] (v1) at (0,0) {};
    \node[vtx] (v2) at (1,1) {};
    \node[vtx] (v3) at (2,2) {};
    \node[vtx] (v4) at (3,3) {};
    \node[vtx] (v5) at (4,1.5) {};

    \fill[fill=black] (v1) circle (0.1) node [left] {$\pi(i)$};
    \draw[line width=0.5mm, color=black]  (v1) -- (v2);
    \fill[fill=black] (v2) circle (0.1) node [left] {$\pi(i+1)$};
    \draw[line width=0.5mm, color=black]  (v2) -- (v3);
    \fill[fill=black] (v3) circle (0.1) node [left] {$\pi(i+2)$};
    \draw[line width=0.5mm, color=black]  (v3) -- (v4);
    \fill[fill=black] (v4) circle (0.1) node [left] {$\pi(i+3)$};
    \draw[line width=0.5mm, color=black]  (v4) -- (v5);
    \fill[fill=black] (v5) circle (0.1) node [right] {$\pi(i+4)$};

\end{tikzpicture}
\qquad
\begin{tikzpicture}
[line width = .5pt,
vtx/.style={circle,draw,black,very thick,fill=black, line width = 1pt, inner sep=2pt},
]

    \node[vtx] (v1) at (0,1.5) {};
    \node[vtx] (v2) at (1,3) {};
    \node[vtx] (v3) at (2,2) {};
    \node[vtx] (v4) at (3,1) {};
    \node[vtx] (v5) at (4,0) {};

    \fill[fill=black] (v1) circle (0.1) node [left] {$\pi(i)$};
    \draw[line width=0.5mm, color=black]  (v1) -- (v2);
    \fill[fill=black] (v2) circle (0.1) node [right] {$\pi(i+1)$};
    \draw[line width=0.5mm, color=black]  (v2) -- (v3);
    \fill[fill=black] (v3) circle (0.1) node [right] {$\pi(i+2)$};
    \draw[line width=0.5mm, color=black]  (v3) -- (v4);
    \fill[fill=black] (v4) circle (0.1) node [right] {$\pi(i+3)$};
    \draw[line width=0.5mm, color=black]  (v4) -- (v5);
    \fill[fill=black] (v5) circle (0.1) node [right] {$\pi(i+4)$};

\end{tikzpicture}
}

\medskip

\scalebox{0.9}
{
\begin{tikzpicture}
[line width = .5pt,
vtx/.style={circle,draw,black,very thick,fill=black, line width = 1pt, inner sep=2pt},
]

    \node[vtx] (v1) at (0,3) {};
    \node[vtx] (v2) at (1,2) {};
    \node[vtx] (v3) at (2,1) {};
    \node[vtx] (v4) at (3,0) {};
    \node[vtx] (v5) at (4,1.5) {};

    \fill[fill=black] (v1) circle (0.1) node [left] {$\pi(i)$};
    \draw[line width=0.5mm, color=black]  (v1) -- (v2);
    \fill[fill=black] (v2) circle (0.1) node [left] {$\pi(i+1)$};
    \draw[line width=0.5mm, color=black]  (v2) -- (v3);
    \fill[fill=black] (v3) circle (0.1) node [left] {$\pi(i+2)$};
    \draw[line width=0.5mm, color=black]  (v3) -- (v4);
    \fill[fill=black] (v4) circle (0.1) node [left] {$\pi(i+3)$};
    \draw[line width=0.5mm, color=black]  (v4) -- (v5);
    \fill[fill=black] (v5) circle (0.1) node [right] {$\pi(i+4)$};

\end{tikzpicture}
\qquad
\begin{tikzpicture}
[line width = .5pt,
vtx/.style={circle,draw,black,very thick,fill=black, line width = 1pt, inner sep=2pt},
]

    \node[vtx] (v1) at (0,1.5) {};
    \node[vtx] (v2) at (1,0) {};
    \node[vtx] (v3) at (2,1) {};
    \node[vtx] (v4) at (3,2) {};
    \node[vtx] (v5) at (4,3) {};

    \fill[fill=black] (v1) circle (0.1) node [left] {$\pi(i)$};
    \draw[line width=0.5mm, color=black]  (v1) -- (v2);
    \fill[fill=black] (v2) circle (0.1) node [right] {$\pi(i+1)$};
    \draw[line width=0.5mm, color=black]  (v2) -- (v3);
    \fill[fill=black] (v3) circle (0.1) node [right] {$\pi(i+2)$};
    \draw[line width=0.5mm, color=black]  (v3) -- (v4);
    \fill[fill=black] (v4) circle (0.1) node [right] {$\pi(i+3)$};
    \draw[line width=0.5mm, color=black]  (v4) -- (v5);
    \fill[fill=black] (v5) circle (0.1) node [right] {$\pi(i+4)$};

\end{tikzpicture}

}

\caption{Cornered 5-tuples.}
\label{fig:cornered}
\end{figure}
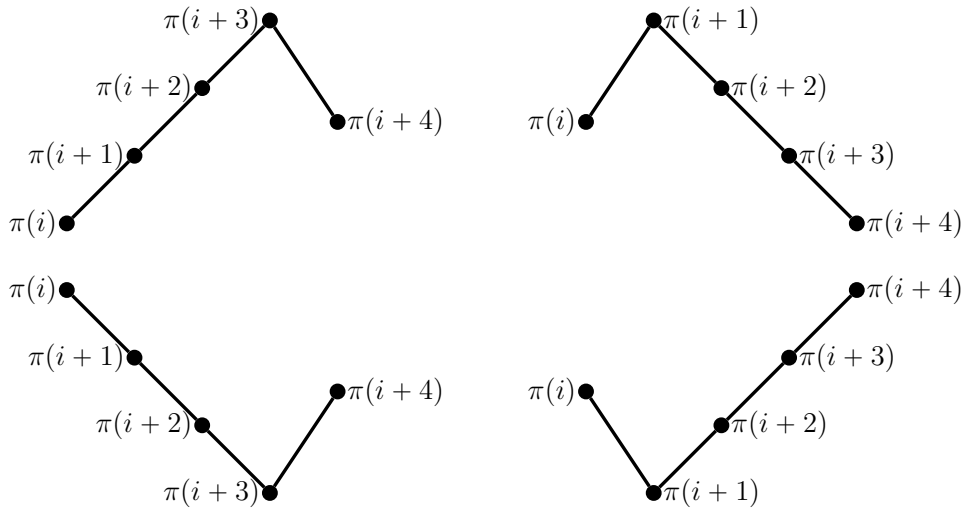

\begin{lemma}\label{crucial}
	  Let $A$ and $B$ be two disjoint alternating sub-sequences in a permutation $\pi$ of $[n]$. Then
\begin{equation}\label{bound}
|A|+|B|\le e(\pi)+co(\pi)+cr(\pi).
\end{equation}
\end{lemma}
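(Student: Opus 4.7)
The plan is to use a charging argument. I will partition the positions of $A\cup B$ into those that are extremal points of $\pi$ and those that are non-extremal (each lying in the interior of some maximal monotone segment $\pi_t$). Since the set of extremal positions contained in $A\cup B$ has size at most $e(\pi)$, it suffices to injectively map each non-extremal position of $A\cup B$ to a distinct cornered or crooked $5$-tuple.

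The key structural fact is that any alternating sub-sequence meets each monotone segment $\pi_t$ of $\pi$ in at most two positions, which must be consecutive in the sub-sequence and form a single up-step (or down-step). Indeed, three elements of $A$ drawn from the same monotone segment of $\pi$ would yield a monotone triple of values, contradicting the alternating pattern. This gives $|A\cap\pi_t|+|B\cap\pi_t|\le 4$, with at most two of these positions being the extremal endpoints of $\pi_t$. I would then set up the charging as follows: the unique interior position of a length-$3$ segment $\pi_t$ (lying in the interior of $[n]$) is sent to the crooked $5$-tuple centered on $\pi_t$; the two interior positions of a length-$\ge 4$ segment $\pi_t$ closest to its endpoints are sent to the two cornered $5$-tuples produced at those endpoints. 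Distinct segments generate disjoint families of cornered/crooked $5$-tuples, so in the easy cases the charging is automatically injective.

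The main obstacle is the case of long monotone segments ($\ell_t\ge 5$), where up to four non-extremal positions of $A\cup B$ might sit inside $\pi_t$ --- two from $A$ and two from $B$ --- while only two cornered $5$-tuples are available for $\pi_t$ itself. The resolution should exploit the alternating structure globally: the two interior positions of $\pi_t$ used by (say) $B$ are forced to be consecutive in $B$ and to be flanked in $B$ by elements whose values lie on the opposite side of the monotone trend of $\pi_t$. These flanking elements must sit in neighboring monotone segments of $\pi$ with very restricted shapes, and by carefully tracing the chain of forced inequalities I expect each such extra interior intrusion into $\pi_t$ to liberate a fresh cornered or crooked $5$-tuple (or an unused extremal point) elsewhere in the permutation, which can then absorb the additional charge. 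Organizing this bookkeeping so that no cornered/crooked $5$-tuple is charged twice and no slack in the extremal count is used up more than once is, I expect, the technical heart of the lemma.
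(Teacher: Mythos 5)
Your structural observations are correct as far as they go, and they match the paper's opening moves: an alternating sub-sequence meets each maximal monotone segment of $\pi$ in at most two positions, necessarily consecutive in the sub-sequence and forming a single step. But the case you defer to ``bookkeeping'' --- four non-extremal elements of $A\cup B$ inside one long monotone segment --- is not a technical residue; it is the actual content of the lemma, it genuinely occurs, and your charging scheme has no mechanism for it. Concretely, take $\pi=(8,9,3,4,5,6,7,13,12,11,10,1,2)$ with $A=(9,4,6,1)$ at positions $2,4,6,12$ and $B=(8,5,7,2)$ at positions $1,5,7,13$. The increasing segment on positions $3,\dots,8$ contains four non-extremal elements of $A\cup B$ (positions $4,5,6,7$), while the only cornered $5$-tuples produced at its two corners are $\{2,\dots,6\}$ and $\{5,\dots,9\}$: two receptacles for four charges. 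The remaining cornered $5$-tuples of this $\pi$ belong to the (unused) decreasing segment, so your claim that ``distinct segments generate disjoint families\dots automatically injective'' covers only the easy case, and the global reassignment you postulate (``liberated'' tuples elsewhere) is exactly what is missing from the proof. Moreover, your very first reduction is needlessly strong and discards the resource that actually saves the count: by making extremal positions of $A\cup B$ pay only for themselves, you forfeit the slack coming from extremal points \emph{not} in $A\cup B$ (in the example, positions $3$ and $8$), and an injection of the non-extremal elements into $5$-tuples alone is then at best tight and in general not available.

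The paper's proof avoids your hard case entirely by relocating \emph{all} of $A\cup B$, not just the non-extremal part. It sets $F$ to be the non-extremal points adjacent to extremal ones, identifies $|F|$ with $co(\pi)+cr(\pi)$ (a point of $F$ with one extremal neighbor witnesses a cornered $5$-tuple, one with two extremal neighbors witnesses a crooked one), and injects $A\cup B$ into $E\cup F$ using the four boundary slots of each segment: the two extremal endpoints and the two endpoint-adjacent interior points. The refinement you lack is the peak/valley bookkeeping: two elements of $A$ in one segment are necessarily one local maximum and one local minimum of $A$, so each segment receives at most two peaks (routed to its top two slots) and at most two valleys (routed to its bottom two), and four intruders per segment are never a local problem. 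The only danger is contention at an extremal point shared by adjacent segments, and this is resolved by the paper's key property: if $j$ is a local maximum of $\pi$, the two segments meeting at $j$ carry \emph{in total} at most one maximal element of $A$ (and likewise of $B$). Your remark that the flanking elements of an intruding pair are forced to the opposite side of the monotone trend is essentially this fact, but it must be used to free endpoint slots, not to conjure new $5$-tuples. (One caveat you would inherit either way: the identity $|F|=co(\pi)+cr(\pi)$ fails by an additive constant when a monotone run touches position $1$ or $n$ --- e.g.\ $\pi=(1,2,3,4,5,10,9,8,7,6)$ has $|F|=4$ but $co(\pi)+cr(\pi)=2$ --- which is harmless for the asymptotic application but shows the $5$-tuple count alone is the wrong invariant to charge against.)
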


Deferring the proof of Lemma \ref{crucial} for later, we now deduce from it the  upper bound in~\eqref{bounds}. Let $L$ count the cornered 5-tuples in the random permutation $\Pi_n$ and let $Z$ count the crooked 5-tuples in $\Pi_n$. Note that the probability that a given 5-tuple  is cornered is
$4\times\binom43/5!=\tfrac8{60}$ and so, $\E(L)=\tfrac8{60}\times(n-4)$. Note also the that the probability that a given 5-tuple is crooked is $2\times\tfrac{11}{5!}=\tfrac{11}{60}$ (see Figure~\ref{fig:crooked_prob}) and so, $\E(W)=\tfrac{11}{60}\times(n-4)$. Another application of the Azuma inequality (Theorem~\ref{azuma})  yields that a.a.s. $L=(1+o(1))\tfrac {8n}{60}$, while $Z=(1+o(1))\tfrac{11n}{60}$. Plugging into \eqref{bound}, we finally obtain that
$$\alpha_n\le\frac12(1+o(1))\left(\frac23+\frac8{60}+\frac{11}{60}\right)n=\left(\frac12-\frac{1}{120}+o(1)\right)n. $$
\end{proof}

\begin{figure}

\scalebox{0.9}
{

\begin{tikzpicture}
[line width = .5pt,
vtx/.style={circle,draw,black,very thick,fill=black, line width = 1pt, inner sep=2pt},
]
    \node[vtx] (v1) at (0,1.5) {};
    \node[vtx] (v2) at (1,0) {};
    \node[vtx] (v3) at (2,1) {};
    \node[vtx] (v4) at (3,2) {};
    \node[vtx] (v5) at (4,0.5) {};

    \fill[fill=black] (v1) circle (0.1) node [left] {$\pi(i)$};
    \draw[line width=0.5mm, color=black]  (v1) -- (v2);
    \fill[fill=black] (v2) circle (0.1) node [right] {$\pi(i+1)$};
    \draw[line width=0.5mm, color=black]  (v2) -- (v3);
    \fill[fill=black] (v3) circle (0.1) node [right] {$\pi(i+2)$};
    \draw[line width=0.5mm, color=black]  (v3) -- (v4);
    \fill[fill=black] (v4) circle (0.1) node [right] {$\pi(i+3)$};
    \draw[line width=0.5mm, color=black]  (v4) -- (v5);
    \fill[fill=black] (v5) circle (0.1) node [right] {$\pi(i+4)$};

\end{tikzpicture}
\qquad
\begin{tikzpicture}
[line width = .5pt,
vtx/.style={circle,draw,black,very thick,fill=black, line width = 1pt, inner sep=2pt},
]

    \node[vtx] (v1) at (0,0.5) {};
    \node[vtx] (v2) at (1,2) {};
    \node[vtx] (v3) at (2,1) {};
    \node[vtx] (v4) at (3,0) {};
    \node[vtx] (v5) at (4,1.5) {};

    \fill[fill=black] (v1) circle (0.1) node [left] {$\pi(i)$};
    \draw[line width=0.5mm, color=black]  (v1) -- (v2);
    \fill[fill=black] (v2) circle (0.1) node [left] {$\pi(i+1)$};
    \draw[line width=0.5mm, color=black]  (v2) -- (v3);
    \fill[fill=black] (v3) circle (0.1) node [left] {$\pi(i+2)$};
    \draw[line width=0.5mm, color=black]  (v3) -- (v4);
    \fill[fill=black] (v4) circle (0.1) node [left] {$\pi(i+3)$};
    \draw[line width=0.5mm, color=black]  (v4) -- (v5);
    \fill[fill=black] (v5) circle (0.1) node [right] {$\pi(i+4)$};

\end{tikzpicture}
}

\caption{Crooked 5-tuples.}
\label{fig:crooked}
\end{figure}
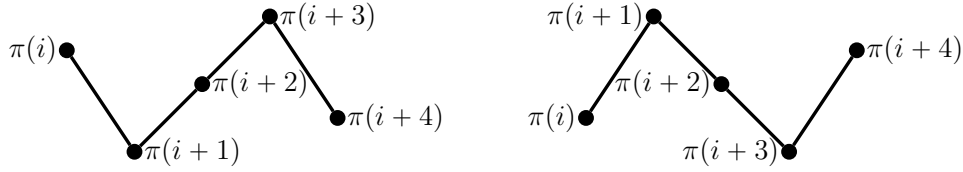

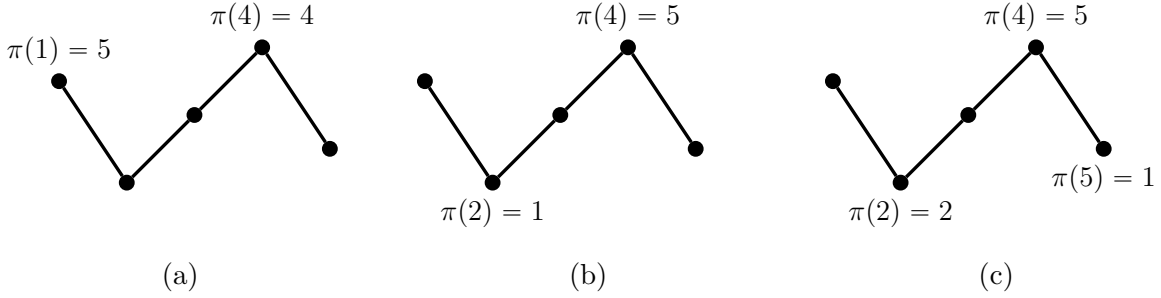
\begin{figure}
\begin{center}
     \begin{subfigure}[b]{0.32\textwidth}
\scalebox{0.9}
{
\centering
\begin{tikzpicture}
[line width = .5pt,
vtx/.style={circle,draw,black,very thick,fill=black, line width = 1pt, inner sep=2pt},
]

    \node[vtx] (v1) at (0,1.5) {};
    \node[vtx] (v2) at (1,0) {};
    \node[vtx] (v3) at (2,1) {};
    \node[vtx] (v4) at (3,2) {};
    \node[vtx] (v5) at (4,0.5) {};

    \fill[fill=black, outer sep=1mm]  (v1) circle (0.1) node [above] {$\pi(1)=5$};
    \draw[line width=0.5mm, color=black]   (v1) -- (v2);
    \fill[fill=black, outer sep=1mm] (v2) circle (0.1) node [below] {\phantom{$\pi(2)=1$}};
    \draw[line width=0.5mm, color=black]  (v2) -- (v3);
    \fill[fill=black] (v3) circle (0.1) node [right] {};
    \draw[line width=0.5mm, color=black]  (v3) -- (v4);
    \fill[fill=black, outer sep=1mm] (v4) circle (0.1) node [above] {$\pi(4)=4$};
    \draw[line width=0.5mm, color=black]  (v4) -- (v5);
    \fill[fill=black] (v5) circle (0.1) node [right] {};
\end{tikzpicture}
}
\caption[a]{}
\label{fig:crooked_prob:1}
\end{subfigure}
     \
\begin{subfigure}[b]{0.32\textwidth}
\scalebox{0.9}
{
\centering
\begin{tikzpicture}
[line width = .5pt,
vtx/.style={circle,draw,black,very thick,fill=black, line width = 1pt, inner sep=2pt},
]

    \node[vtx] (v1) at (0,1.5) {};
    \node[vtx] (v2) at (1,0) {};
    \node[vtx] (v3) at (2,1) {};
    \node[vtx] (v4) at (3,2) {};
    \node[vtx] (v5) at (4,0.5) {};

    \fill[fill=black] (v1) circle (0.1) node [left] {};
    \draw[line width=0.5mm, color=black]  (v1) -- (v2);
    \fill[fill=black, outer sep=1mm] (v2) circle (0.1) node [below] {$\pi(2)=1$};
    \draw[line width=0.5mm, color=black]  (v2) -- (v3);
    \fill[fill=black] (v3) circle (0.1) node [right] {};
    \draw[line width=0.5mm, color=black]  (v3) -- (v4);
    \fill[fill=black, outer sep=1mm] (v4) circle (0.1) node [above] {$\pi(4)=5$};
    \draw[line width=0.5mm, color=black]  (v4) -- (v5);
    \fill[fill=black] (v5) circle (0.1) node [below] {};
\end{tikzpicture}
}
\caption[b]{}
\label{fig:crooked_prob:2}
\end{subfigure}
     \
\begin{subfigure}[b]{0.32\textwidth}
\scalebox{0.9}
{
\centering
\begin{tikzpicture}
[line width = .5pt,
vtx/.style={circle,draw,black,very thick,fill=black, line width = 1pt, inner sep=2pt},
]

    \node[vtx] (v1) at (0,1.5) {};
    \node[vtx] (v2) at (1,0) {};
    \node[vtx] (v3) at (2,1) {};
    \node[vtx] (v4) at (3,2) {};
    \node[vtx] (v5) at (4,0.5) {};

    \fill[fill=black] (v1) circle (0.1) node [left] {};
    \draw[line width=0.5mm, color=black]  (v1) -- (v2);
    \fill[fill=black, outer sep=1mm] (v2) circle (0.1) node [below] {$\pi(2)=2$};
    \draw[line width=0.5mm, color=black]  (v2) -- (v3);
    \fill[fill=black] (v3) circle (0.1) node [right] {};
    \draw[line width=0.5mm, color=black]  (v3) -- (v4);
    \fill[fill=black, outer sep=1mm] (v4) circle (0.1) node [above] {$\pi(4)=5$};
    \draw[line width=0.5mm, color=black]  (v4) -- (v5);
    \fill[fill=black, outer sep=1mm] (v5) circle (0.1) node [below] {$\pi(5)=1$};
\end{tikzpicture}
}
\caption[c]{}
\label{fig:crooked_prob:3}
\end{subfigure}
\end{center}

\caption{ \subref{fig:crooked_prob:1}~If $\pi(1)=5$, then $\pi(4)=4$ and the remaining number of choices is $\binom{3}{2}$. \subref{fig:crooked_prob:2}~If $\pi(4)=5$ and $\pi(2)=1$, then we have $3!$ choices. \subref{fig:crooked_prob:3}~Finally, if $\pi(4)=5$ and $\pi(5)=1$, then there are $2!$ remaining choices.}
\label{fig:crooked_prob}
\end{figure}

It remains to prove  Lemma \ref{crucial}.

\begin{proof}[Proof of Lemma~\ref{crucial}] Let $A$ and $B$ be given as in the lemma. Let $E$ be the set of extremal points in $\pi$ and $F$ -- the set of points neighboring the extremal points but not extremal themselves.
We are going to construct an injective mapping $\phi: A\cup B\to E\cup F$. Then, noting that $|F|=co(\pi)+cr(\pi)$, completes the proof.

Let $j_1,\dots,j_k$ be all extremal points in $\pi$. These points divide the whole range $[n]$ into monotone segments
$$\pi_i=(\pi(j_i),\pi(j_i+1),\dots, \pi(j_i+\ell_i),\pi(j_{i+1}))$$
$i=1,\dots,k-1$. Note that the number of inner points of $\pi_i$, $\ell_i$ can equal 0. Before constructing the  desired mapping $\phi$, let us examine the distribution of the set $A\cup B$ among the segments $\pi_i$. Our first observation is that each segment contains at most two elements of $A$ and at most two elements of $B$. Moreover, if $\pi_i$ contains exactly two elements of $A$, then one of them is a minimal element of $A$ and the other -- a maximal element of $A$, and the same is true for $B$. But most crucial is the following property concerning a pair of consecutive segments $\pi_i$ and $\pi_{i+1}$. If $j_{i+1}$ is maximal, respectively, minimal in $\pi$, then there is in total at most one maximal, resp., minimal element of $A$ on these segments.

Knowing all this, it is easy to see that the following construction is, indeed, an injection. If $\pi_i$ is increasing, then to the maximal elements of $A\cup B$ lying on $\pi_i$, assign the top-most two elements of $\pi_i$, that is, to $\pi(j_i+\ell_i),\pi(j_{i+1})$, in any feasible fashion. While to the minimal elements of $A\cup B$ lying on $\pi_i$ assign the two down-most elements of $\pi_i$, that is, to $\pi(j_i),\pi(j_i+1)$. If $\pi_i$ is decreasing, we proceed similarly, but with the pairs $\pi(j_i+\ell_i),\pi(j_{i+1})$ and $\pi(j_i),\pi(j_i+1)$ swapped.
\end{proof}

\section{Proof of Proposition \ref{alt_the_king}}\label{proof_prop}

Given a sequence $s=(s_1,\dots,s_r)$ with $s_i\in\{+,-\}$ and a linearly ordered set $S$ of size $|S|=r+1$, denote by $\cN_S(s)$  the set of all permutations $\pi$ of $S$ with the shape $s(\pi)=s$. If $S=[r+1]$, then we abbreviate $\cN(s):=\cN_{[r+1]}(s)$. Further, let $N_S(s)=|\cN_S(s)|$. Observe that $N_S(s)$ does not depend on $S$, so we skip the subscript $_S$ altogether here.

The complement of a sequence $s=(s_1,\dots,s_r)$ is naturally defined as the sequence $\bar s=(\bar s_1,\dots,\bar s_r)$, where $\{s_i,\bar s_i\}=\{+,-\}$ for each $i$. In other words, one replaces each $+$ in $s$ with $-$, and vice versa. It is easy to see that $N(s)=N(\bar s)$.

Recall that $A_n=N(a^{(n)}_+)=N(a^{(n)}_-)$. Our proof of Proposition \ref{alt_the_king} is by induction on $n$ and, in its final accord, utilizes the following known identity involving the sequence $A_n$ (see, e.g.,~\cite{StanleySurvey}):
\begin{equation}\label{Stan}
\sum_{k=0}^n\binom nkA_kA_{n-k}=2A_{n+1}.
\end{equation}

What is more, our proof is also inspired by the  idea behind the proof of \eqref{Stan}, which is to build a permutation of $[n+1]$ beginning with positioning the element $n+1$, and then separately counting the completions to the left and to the right of it.
Also, as the R-H-S of \eqref{Stan} is a double of what we want, we are doomed to count in permutations with the complementary shape as well.

\begin{proof}[Proof of Proposition \ref{alt_the_king}] For $n\le3$, the proposition follows by inspection. Fix $n\ge3$ and assume it is true for all $n'\le n$. Given a shape $s^{(n)}:=s=(s_1,\dots,s_n)$ our goal is to show that $N(s)\le A_{n+1}$.

For each $k=0,1,\dots,n$, let $\cN_k(s)=\{\pi\in\cN(s): \pi_{k+1}=n+1\}$. As $n+1$ is always a maximum element of $\pi$, we have $\cN_k(s)\neq\emptyset$ if and only if $s_k=+$ and $s_{k+1}=-$. Thus, setting $K^{\wedge}=\{k: s_k=+ \text{ and } s_{k+1}=-\}$, we have $\cN(s)=\bigcup_{k\in K^\wedge}\cN_k(s)$, and, as the sets under the union are obviously disjoint, $N(s)=\sum_{k\in K^\wedge}N_k(s)$, where $N_k(s)=|\cN_k(s)|$.
For a fixed $k$, let us focus  on the number $N_k(s)$. Every permutation in $\cN_k(s)$ consists of a `prefix' $u$, followed by $n+1$, followed by a suffix $v$. Introducing `truncated' shapes $s_k'=(s_1,\dots,s_{k-1})$ and $s_k''=(s_{k+2},\dots,s_n)$, $u$ and $v$ must satisfy $s(u)=s_k'$ and $s(v)=s_k''$. Hence, using also the induction assumption,
$$N_k(s)=\binom nk N(s_k')N(s_k'')\le \binom nk A_kA_{n-k}.$$

The same is true for the complementary shape $\bar s$ as well. Recalling that $N(\bar s)=N(s)$ and noticing that the set
$$\{k: \bar s_k=+ \text{ and } \bar s_{k+1}=-\}=\{k: s_k=- \text{ and } s_{k+1}=+\}=:K^\vee$$
is disjoint from $K^\wedge$, we thus conclude that
$$2N(s)\le\sum_{k\in K^{\wedge}\cup K^\vee}\binom nk A_kA_{n-k}\le\sum_{k=0}^n\binom nkA_kA_{n-k}=2A_{n+1},$$
where the last equality is \eqref{Stan}.

\end{proof}
\section{Concluding remarks}

We believe that the lower bound in Theorem~\ref{thm_weak} can be improved and it is plausible to conjecture that $wt(n) \sim \frac{n}{2}$. As a matter of fact, if $\pi$ happens to be an $n$-permutation with $e(\pi) = o(n)$, then the construction used in the proof of~\eqref{bounds_gen} yields~$wt(\pi)\sim\frac{n}{2}$.

It is also not difficult to see that the lower bound on~$\alpha_n$ in Theorem~\ref{lem_ran_alt} can be improved. Let $\pi'$ be the sub-permutation obtained from $\pi$ by removing all the extremal points of~$\pi$. Recall that in the proof of the lower bound~\eqref{bounds} we estimated $e(\pi')$ by using the lucky six tuples. But one can also consider more ``lucky'' structures. This can be done by incorporating zigzags into the lucky six tuples as in Figure~\ref{fig:newlucky}, for example. This already gives an improvement on the lower bound on $\alpha_n$:
\[
\alpha_n \ge \left(\frac{1}{3} + \frac{1}{60} + \frac{1}{2} \cdot 4 \cdot \frac{117+105}{8!}+o(1)\right)n.
\]
Now we can consider longer lucky tuples (of length $10, 12, 14,...$) and use computer to calculate the corresponding expectations. Computer simulations suggest that
\[
\alpha_n\ge (1/3 + 0.1006...)n.
\]
We do not know what the exact value of the second term is here, since it is not even clear how to compute the expected value $\E(e(\pi'))$.

Finally, let us define, for even $n$, the function $T(n)$ which counts all $n$-permutations that are weak twins of length $n/2$, that is, all $n$-permutations that can be split into two sub-permutations with the same pattern. What is the asymptotic growth of $T(n)$?
\begin{figure}[t]

\begin{subfigure}[b]{0.3\textwidth}
\scalebox{0.9}
{
\begin{tikzpicture}
[line width = .5pt,
vtx/.style={circle,draw,black,very thick,fill=black, line width = 1pt, inner sep=2pt},
vtx2/.style={circle,draw,blue,very thick,fill=blue, line width = 1pt, inner sep=2pt},
]

    \node[vtx] (v1) at (0,0) {};
    \node[vtx2] (v2) at (1,1) {};
    \node[vtx2] (v3) at (2,2) {};
    \node[vtx] (v4) at (3,3) {};
    \node[vtx] (v4a) at (3.5,2.25) {};
    \node[vtx] (v4b) at (3.9,3.5) {};

    \node[vtx2] (v5) at (4,1.5) {};
    \node[vtx] (v6) at (4.8333,0.25) {};

    \draw[line width=0.5mm, color=black]  (v1) -- (v2);
    \draw[line width=0.5mm, color=black]  (v2) -- (v3);
    \draw[line width=0.5mm, color=black]  (v3) -- (v4);
    \draw[line width=0.5mm, color=black]  (v4) -- (v4a);
    \draw[line width=0.5mm, color=red]  (v4a) -- (v4b);
    \draw[line width=0.5mm, color=red]  (v4b) -- (v5);
    \draw[line width=0.5mm, color=black]  (v5) -- (v6);

    \end{tikzpicture}}
\caption[b]{}
\label{fig:newlucky:1}
\label{fig:}
\end{subfigure}
\qquad\qquad
\begin{subfigure}[b]{0.3\textwidth}
\scalebox{0.9}
{
\begin{tikzpicture}
[line width = .5pt,
vtx/.style={circle,draw,black,very thick,fill=black, line width = 1pt, inner sep=2pt},
vtx2/.style={circle,draw,blue,very thick,fill=blue, line width = 1pt, inner sep=2pt},
]

    \node[vtx] (v1) at (0,0) {};
    \node[vtx2] (v2) at (1,1) {};
    \node[vtx] (v2a) at (1.5,1.5) {};
    \node[vtx] (v2b) at (1.9,0.5) {};
    \node[vtx2] (v3) at (2,2) {};
    \node[vtx] (v4) at (3,3) {};
    \node[vtx2] (v5) at (4,1.5) {};
    \node[vtx] (v6) at (4.8333,0.25) {};

    \draw[line width=0.5mm, color=black]  (v1) -- (v2);
    \draw[line width=0.5mm, color=black]  (v2) -- (v2a);
    \draw[line width=0.5mm, color=red]  (v2a) -- (v2b);
    \draw[line width=0.5mm, color=red]  (v2b) -- (v3);
    \draw[line width=0.5mm, color=black]  (v3) -- (v4);
    \draw[line width=0.5mm, color=black]  (v4) -- (v6);
    \node[vtx2] (v5) at (4,1.5) {};

\end{tikzpicture}
}
\caption[b]{}
\label{fig:newlucky:2}
\end{subfigure}

\caption{\subref{fig:newlucky:1} $117$ choices; \subref{fig:newlucky:2} 105 choices.}
\label{fig:newlucky}
\end{figure}

\end{document}